 \newcommand {\C} {{\mathbb C}}
 \newcommand{\BC}{{\mathbb C}}
\newcommand{\BQ}{{\mathbb Q}}
 \newcommand{\BZ}{{\mathbb Z}}
 \newcommand {\Q} {{\mathbb Q}}
 \newcommand {\PP} {{\mathbb P}}
 \newcommand {\bH}{{\mathbb{H}}}
 \newcommand {\dt} {{\bullet}}
\newcommand{\scE}{\mathscr{E}}
\newcommand{\sF} {\mathscr{F}}
 \newcommand {\gr} {\text{\rm Gr}}
 \newcommand {\mO}{\mathcal{O}}
 \newcommand{\im}{\text{\rm im}}
 \newcommand {\cl}{\text{\rm cl}}
 \newcommand {\ch}{\text{\rm ch}} 
\newcommand{\ds}{\displaystyle}
\newcommand{\Ch}{\text{\rm CH}}
\newcommand{\ol}{\overline}
\newcommand{\Pic}{\text{\rm Pic}}
\newcommand{\m}{{\mathcal M}}
 \newtheorem{cor}[subsection]{Corollary}
 \newtheorem{lemma}[subsection]{Lemma}
 \newtheorem{prop}[subsection]{Proposition}
 \newtheorem{rmk}[subsection]{Remark}
  \numberwithin{equation}{section}
\newcommand{\propref}[1]{Proposition \ref{#1}}
\newcommand{\lemref}[1]{Lemma \ref{#1}}
\newcommand{\coref}[1]{Corollary \ref{#1}}
\begin{document}
\title{The smooth center of the  cohomology of  a singular variety}
\author{Donu Arapura}
\address{Department of Mathematics, Purdue University, West Lafayette, IN 47906, U.S.A.}
\email{dvb@math.purdue.edu}
\thanks{The first author was partially supported by the NSF}
\author{Xi Chen}
\address{Department of Mathematical Sciences, University of Alberta, Edmonton, AB T6G 2G1, Canada}
\email{xic@ualberta.ca}
\thanks{The second author was partially supported by a grant from the Natural Sciences
and Engineering Research Council of Canada.}
\author{Su-Jeong Kang}
\address{Department of Mathematics, Providence College, Providence, RI 02918, U.S.A.}
\email{skang2@providence.edu}
\thanks{The third author was partially supported by the Summer Scholar Award from Providence College.}

\subjclass[2000]{Primary 14C30, 14C25}
\keywords{Chern classes, smooth center of the cohomology, strange variety}
\date{}

\begin{abstract}
We study constraints on the Chern classes of a vector bundle on a singular variety.  We use this constraint to study a variety which carries a Hodge cycle that are not a linear combination of Chern classes of vector bundles on it. 
\end{abstract}

\maketitle

As is well known, the Hodge conjecture is equivalent to the statement that
Hodge cycles on a smooth projective variety are rational linear combinations of
Chern classes of algebraic vector bundles  (see \cite{ArapuraKang} for further
explanation). This is no longer true for singular varieties.
We will refer to a projective variety $X$ as {\em
  strange} if $X$ carries a weight $2p$ Hodge cycle in $H^{2p}(X,\Q)$ for some $p$,  which is not a  linear combination
of Chern classes. Examples of strange varieties have been constructed by  Bloch \cite[appendix A]{Jannsen},
Barbieri-Viale and Srinivas \cite{bs}, and two of the authors
\cite{ArapuraKang}.
In attempting to  understand the precise nature of strangeness,
 we were led to the following construction:
The {\em smooth center} of the cohomology of a complex projective variety is
the sum of the pullbacks of cohomologies of smooth varieties dominated by it. More generally,
suppose that $G$ is a contravariant functor from the category of
algebraic varieties over some field to a suitable abelian category. Given a projective variety $X$,
 we define the smooth center of  $G(X)$  by
$$
G_{\rm sm} (X) = \sum_{(Y,f) \in \mathcal{C}(X)} \,   f^*G(Y)
\subseteq G(X),
$$
where $\mathcal{C}(X)$ is the collection of pairs  $(Y,f)$ consisting
of a nonsingular variety $Y$ and a morphism $f:X\to Y$. 
 It is  clear that $G_{\rm sm}$ is a subfunctor of $G$.  
While we hope that the above construction is interesting for itself, we
focus on the motivating problem.
There are two cases of interest to us: when $G=K^0$ is the
Grothendieck group of algebraic vector bundles, and when 
the ground field is $\C$ and $G=H^*$ singular
rational cohomology regarded as either a vector space or a mixed Hodge
structure. We will see that $K^0_{\rm sm}(X)=K^0(X)$ is always true,
but that $H^*_{\rm sm}(X)\not=H^*(X)$ in general. It will follow 
that Chern classes lie in $H^*_{\rm sm}(X)$ and that this gives a
genuine constraint. In the latter part of this paper, we examine some
new and previously known examples where we can exhibit strangeness using this method.

Perhaps we should add that,
 as the referee has pointed out to us,  a device similar to the smooth center was employed by Fulton \cite{fulton} for defining Chow cohomology of singular varieties.

\section{Definition and properties of the smooth center of the cohomology}

 As above, we define the smooth center of the cohomology of a complex projective variety $X$ by
$$
H^i_{\rm sm} (X,\Q) = \sum_{(Y,f) \in \mathcal{C}(X)} \,   f^*H^i(Y,\Q).
$$
Since the Hodge structures $H^i(Y,\Q)$ are pure, we obtain:

\begin{lemma}
  $H^i_{\rm sm} (X,\Q)$ is  pure of weight $i$, and therefore $W_{i-1}H^i(X,\Q) \cap H^i_{\rm sm} (X,\Q)=0$.
\end{lemma}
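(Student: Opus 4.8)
The plan is to carry out the whole argument inside the abelian category of rational mixed Hodge structures (MHS), using that every ingredient in the definition of $H^i_{\rm sm}(X,\Q)$ respects that structure. Two standard facts do all the work: (i) by Deligne's theory, for a morphism of complex varieties $f\colon X\to Y$ the pullback $f^*\colon H^i(Y,\Q)\to H^i(X,\Q)$ is a morphism of MHS; and (ii) morphisms of MHS are strictly compatible with the weight filtration, equivalently each functor $\gr^W_k$ is exact. Granting, as observed just before the statement, that $H^i(Y,\Q)$ is pure of weight $i$ for $(Y,f)\in\mathcal{C}(X)$, the problem reduces to two formal claims about sub-MHS of $H^i(X,\Q)$: that the image of a pure weight-$i$ structure under a morphism of MHS is pure of weight $i$, and that a sum of pure weight-$i$ sub-structures is again pure of weight $i$.

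First I would show that each summand $f^*H^i(Y,\Q)$ is pure of weight $i$. Viewing $f^*$ as a surjection onto its image, exactness of $\gr^W_k$ gives that $\gr^W_k\bigl(f^*H^i(Y,\Q)\bigr)$ is a quotient of $\gr^W_k H^i(Y,\Q)$, and the latter vanishes for $k\neq i$ by purity of $H^i(Y,\Q)$; hence the image is concentrated in weight $i$. The only place completeness of $X$ would be needed is if one allows noncompact smooth $Y$: then $H^i(Y,\Q)$ carries weights $\ge i$ rather than being pure, but $H^i(X,\Q)$ having weights $\le i$ (because $X$ is complete) forces the image to be pure of weight $i$ by the same strictness argument. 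I would record this as a remark covering the general nonsingular $Y$.

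Next I would pass from the individual summands to their sum by realizing $H^i_{\rm sm}(X,\Q)$ as the image of a \emph{single} morphism of MHS. Since $H^i(X,\Q)$ is finite dimensional, finitely many pairs $(Y_1,f_1),\dots,(Y_n,f_n)$ already span the smooth center, so $H^i_{\rm sm}(X,\Q)$ equals the image of
$$
\bigoplus_{j=1}^{n} H^i(Y_j,\Q)\ \xrightarrow{\ (f_1^*,\dots,f_n^*)\ }\ H^i(X,\Q).
$$
The source is a finite direct sum of pure weight-$i$ structures, hence pure of weight $i$, and the map is a morphism of MHS, so by the preceding paragraph its image $H^i_{\rm sm}(X,\Q)$ is pure of weight $i$. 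The asserted vanishing is then immediate: the weight filtration of the sub-MHS $H^i_{\rm sm}(X,\Q)$ is induced from the ambient one, so $W_{i-1}H^i(X,\Q)\cap H^i_{\rm sm}(X,\Q)=W_{i-1}H^i_{\rm sm}(X,\Q)$, which vanishes by purity. I expect no serious obstacle here; the only points requiring care are the bookkeeping with strictness/exactness of $\gr^W$ and, if one does not restrict to projective $Y$, the use of completeness of $X$ to bound the weights of $H^i(X,\Q)$ from above, which is exactly what kills any contribution from the higher-weight part of $H^i(Y,\Q)$.
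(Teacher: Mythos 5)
Your proof is correct and follows essentially the same route as the paper, whose entire argument is the one-line observation that the Hodge structures $H^i(Y,\Q)$ are pure, followed by the formal facts about images and sums of pure weight-$i$ sub-structures that you spell out. If anything, your version is more careful: since $\mathcal{C}(X)$ as defined allows noncompact smooth $Y$, your remark combining weights $\geq i$ on $H^i(Y,\Q)$ with weights $\leq i$ on $H^i(X,\Q)$ (completeness of $X$) covers a case that the paper's justification silently ignores.
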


From the lemma, we can identify  $H^i_{\rm sm} (X,\Q)$ as a sub-Hodge
structure of $\gr^W_{i}H^i(X,\Q)$. In \cite{ArapuraKang}, we defined 
a natural filtration $F^{\dt}_{\rm DR}$ on $H^i(X,\C)$ called the de Rham
filtration
\begin{equation}\label{eq:dr-filtration}
F^p_{\rm DR} H^i(X,\C) = H^i(X,\C) \cap \rho \left( \im[\bH^i(X,\Omega^{\geq p}_X) \to \bH^i(X,\Omega^{\dt}_X)] \right)
\end{equation}
where $\rho: \Omega^{\dt}_X \to \tilde{\Omega}^{\dt}_X$ is the morphism from the complex of sheaves of K\"{a}hler differentials $\Omega^{\dt}_X$ on $X$ \cite[\S 16.6]{ega4} to the Du Bois complex $\tilde{\Omega}^{\dt}_X$ \cite{dubois}. Since $\rho$ preserves filtrations on complexes, this implies that the de Rham filtration is generally finer than the Hodge filtration.

\begin{lemma}\label{lemma:contained-in-dr}
$\ds{F^p H^i_{\rm sm}(X,\C) \cap H^i_{\rm sm}(X,\Q)  \subseteq F^p_{\rm DR}H^i(X,\C) \cap \gr^W_i H^i(X,\Q)}$,
where $F^{\dt}$ is the Hodge filtration on $H^i_{\rm sm}(X,\Q)$.
\end{lemma}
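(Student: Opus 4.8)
The plan is to reduce an arbitrary class in $F^pH^i_{\rm sm}(X,\C)\cap H^i_{\rm sm}(X,\Q)$ to a pullback of a Hodge-filtered class from a single smooth projective variety, where the de Rham and Hodge filtrations coincide, and then to invoke the functoriality of $F^{\dt}_{\rm DR}$. By the preceding lemma, $H^i_{\rm sm}(X,\Q)$ is a sub-mixed Hodge structure of $H^i(X,\Q)$ that is pure of weight $i$ and injects into $\gr^W_iH^i(X,\Q)$; consequently its Hodge filtration is the induced one, $F^pH^i_{\rm sm}(X,\C)=H^i_{\rm sm}(X,\C)\cap F^pH^i(X,\C)$, and the weight-$i$ condition on the right-hand side is automatic. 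So I may take $\alpha\in H^i_{\rm sm}(X,\Q)\cap F^pH^i(X,\C)$ and aim only to show $\alpha\in F^p_{\rm DR}H^i(X,\C)$.

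First I would collapse the defining sum to a single pair. Writing $\alpha=\sum_k f_k^*\beta_k$ with $(Y_k,f_k)\in\mathcal{C}(X)$ and $\beta_k\in H^i(Y_k,\Q)$, the product $Y=\prod_kY_k$ is again nonsingular and $f=(f_k)\colon X\to Y$ lies in $\mathcal{C}(X)$; by the K\"unneth formula $\alpha=f^*\beta$ with $\beta=\sum_k\pi_k^*\beta_k$. Next, choosing a smooth projective compactification $\ol{Y}\supseteq Y$ with $j\colon Y\hookrightarrow\ol{Y}$ and setting $\ol{f}=j\circ f$, I would show that $\alpha$ is in fact a pullback from $\ol{Y}$. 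The key reduction rests on weights: since $X$ is projective, $H^i(X,\Q)$ has weights $\le i$, while $H^i(Y,\Q)$ has weights $\ge i$ for smooth $Y$; as $f^*$ is a morphism of mixed Hodge structures and hence strict for $W$, one gets $\im f^*=f^*(W_iH^i(Y,\Q))$. Because $W_iH^i(Y,\Q)=\im(j^*\colon H^i(\ol{Y},\Q)\to H^i(Y,\Q))$ for a good compactification, it follows that $\im f^*=\im\ol{f}^*$, so $\alpha\in\im\ol{f}^*$. Strictness of $\ol{f}^*$ for the Hodge filtration then yields $\alpha=\ol{f}^*\gamma$ with $\gamma\in F^pH^i(\ol{Y},\C)$.

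Finally, on the smooth projective $\ol{Y}$ the Hodge--de Rham spectral sequence degenerates and $\rho$ is a filtered quasi-isomorphism, so $F^pH^i(\ol{Y},\C)=F^p_{\rm DR}H^i(\ol{Y},\C)$ and hence $\gamma\in F^p_{\rm DR}H^i(\ol{Y},\C)$. The morphism $\ol{f}$ induces maps of complexes $\Omega^{\dt}_{\ol{Y}}\to R\ol{f}_*\Omega^{\dt}_X$ and $\tilde{\Omega}^{\dt}_{\ol{Y}}\to R\ol{f}_*\tilde{\Omega}^{\dt}_X$ compatible with $\rho$ and with the naive filtrations $\Omega^{\ge p}$, so $F^{\dt}_{\rm DR}$ is functorial and $\alpha=\ol{f}^*\gamma\in F^p_{\rm DR}H^i(X,\C)$, as desired.

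I expect the main obstacle to be the weight-and-strictness reduction: one must verify carefully that the $F^p$-condition descends along $\ol{f}^*$ to a genuine class on $\ol{Y}$, which requires the simultaneous strict compatibility of $f^*$ and $j^*$ with both $W$ and $F$ together with the identification of $W_iH^i(Y,\Q)$ with the image of $H^i(\ol{Y},\Q)$. Establishing the functoriality of $F^{\dt}_{\rm DR}$ at the needed generality, namely the pullback compatibility of the Du Bois complex with its filtration under $\rho$, is the other point demanding care, though it should follow from the naturality of the construction in \cite{ArapuraKang}.
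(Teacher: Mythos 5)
Your proof is correct, and its engine is the same as the paper's: strict compatibility of pullback with the Hodge filtration, the identification of $F^p$ with $F^p_{\rm DR}$ on a smooth projective variety, and functoriality of $F^{\dt}_{\rm DR}$. The difference lies in two reductions that you make explicit and the paper's one-line proof leaves implicit. The first, collapsing the defining sum to a single pullback $\alpha = f^*\beta$ via the product $\prod_k Y_k$, is routine. The second is more substantive: since $\mathcal{C}(X)$ as defined admits non-proper smooth targets $Y$, you pass to a smooth projective compactification $\ol{Y}$, using $W$-strictness of $f^*$ together with Deligne's identification $W_iH^i(Y,\Q) = \im\left[H^i(\ol{Y},\Q) \to H^i(Y,\Q)\right]$ to get $\im f^* = \im \ol{f}^*$, and only then apply $F$-strictness to produce $\gamma \in F^pH^i(\ol{Y},\C)$. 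This buys something real: the paper's proof asserts the equality $F^pH^i(Y,\C) = F^p_{\rm DR}H^i(Y,\C)$ for $Y \in \mathcal{C}(X)$, which is literally true only for proper $Y$; for an open smooth variety one has in general only the inclusion $F^pH^i(Y,\C) \subseteq F^p_{\rm DR}H^i(Y,\C)$ (e.g.\ for a smooth affine curve $Y$ of positive genus, $F^1_{\rm DR}H^1(Y,\C)$ is all of $H^1(Y,\C)$ because global algebraic $1$-forms already compute the de Rham cohomology of an affine curve, while $F^1H^1(Y,\C)$ is a strict subspace). That inclusion would in fact suffice to repair the paper's argument without compactifying, so both routes are sound; yours invokes only the classical smooth projective case of $F^p = F^p_{\rm DR}$ at the cost of one extra ingredient from Deligne's theory, while the paper's is shorter but, read literally, presumes the targets in $\mathcal{C}(X)$ are projective.
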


\begin{proof}
Let $\alpha = f^*(\beta) \in F^pH^i_{\rm sm}(X,\C) $ where $(Y,f) \in \mathcal{C}(X)$. Since a morphism of Hodge structures preserves the Hodge filtration strictly, $\beta$ should be a cycle in $F^p H^i(Y,\C) =F^p_{\rm DR}H^i(Y,\C)$ and hence $\alpha  \in f^*(F^p_{\rm DR}H^i(Y,\C)) \subseteq  F^p_{\rm DR} H^i(X,\C)$ by the functoriality of the de Rham filtration. 
\end{proof}

\begin{lemma}
$H^*_{\rm sm}(X,\Q)$ is a subring of $H^*(X,\Q)$ under cup product.
\end{lemma}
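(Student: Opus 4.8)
The plan is to reduce to ``elementary'' classes and then exploit the fact that the indexing collection $\mathcal{C}(X)$ is stable under taking products of the target. By definition $H^i_{\rm sm}(X,\Q)$ is the sum of the subspaces $f^*H^i(Y,\Q)$, and cup product is bilinear, so a general element of $H^i_{\rm sm}(X,\Q)$ (resp.\ $H^j_{\rm sm}(X,\Q)$) is a finite $\Q$-linear combination of classes of the form $f^*(a)$. Hence it suffices to prove that $\alpha \cup \beta \in H^{i+j}_{\rm sm}(X,\Q)$ whenever $\alpha = f^*(a)$ and $\beta = g^*(b)$ for pairs $(Y,f),(Z,g) \in \mathcal{C}(X)$ with $a \in H^i(Y,\Q)$ and $b \in H^j(Z,\Q)$.

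Given such data, I would form the product $Y \times Z$, which is again nonsingular, together with the morphism $h = (f,g) : X \to Y \times Z$; thus $(Y \times Z, h) \in \mathcal{C}(X)$. Writing $p$ and $q$ for the two projections from $Y \times Z$, one has $f = p \circ h$ and $g = q \circ h$, so by functoriality of pullback $\alpha = h^* p^* a$ and $\beta = h^* q^* b$.

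The key step is then the computation
$$
\alpha \cup \beta = (h^* p^* a) \cup (h^* q^* b) = h^*\bigl( p^* a \cup q^* b \bigr),
$$
which uses that $h^*$ is a ring homomorphism for cup product. Since $p^* a \cup q^* b \in H^{i+j}(Y \times Z,\Q)$ and $Y \times Z$ is nonsingular, the right-hand side lies in $h^* H^{i+j}(Y \times Z,\Q) \subseteq H^{i+j}_{\rm sm}(X,\Q)$, as desired. To see that this subring contains the unit, I would take $Y$ to be a point and $f$ the structure morphism, giving $f^* H^0(\mathrm{pt},\Q) = \Q \subseteq H^0_{\rm sm}(X,\Q)$.

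I do not expect a serious obstacle here: the only points to keep straight are that the class of smooth targets is closed under products (so that $(f,g)$ indeed lands in a smooth variety in $\mathcal{C}(X)$), and that the external product $p^*a \cup q^*b$ really computes the cup product after pulling back by $h$, which is just functoriality. If one also wishes to record the mixed Hodge structure, one notes that every map in sight is a morphism of mixed Hodge structures, so the identity $\alpha \cup \beta = h^*(p^*a \cup q^*b)$ is compatible with the Hodge and weight filtrations without extra work.
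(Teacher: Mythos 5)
Your proof is correct and is essentially the paper's own argument: your map $h=(f,g)$ is exactly the composite $(f\times g)\circ\Delta$ used in the paper, and your class $p^*a\cup q^*b$ is the external product $a\otimes b$, so the two computations coincide. The only (harmless) additions are your explicit reduction by bilinearity to classes of the form $f^*(a)$ and your remark about the unit coming from the structure map to a point.
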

\begin{proof}
This  follows immediately from
 $$
 \alpha \cup \beta = \Delta^*(\alpha \otimes \beta) = \Delta^* (f \times g)^*(\alpha_1 \otimes \beta_1) \in H^{i+j}_{\rm sm}(X,\Q)
 $$ 
for any $\alpha =f^*(\alpha_1) \in H^i_{\rm sm}(X,\Q)$ and $\beta  = g^*(\beta_1) \in H^j_{\rm sm}(X,\Q)$ where $f: X \to T$ and $g: X \to S$ are morphisms to smooth varieties $T$ and $S$.
\end{proof}

\begin{lemma}\label{lemma:K0sm}
$K^0_{\rm sm}(X)= K^0(X)$.  
\end{lemma}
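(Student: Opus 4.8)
The containment $K^0_{\rm sm}(X)\subseteq K^0(X)$ holds by construction, so the plan is to prove the reverse inclusion by showing that every class in $K^0(X)$ is a pullback from a smooth variety. Since $K^0_{\rm sm}(X)=\sum_{(Y,f)\in\mathcal C(X)}f^*K^0(Y)$ is a subgroup of $K^0(X)$, being a sum of images of group homomorphisms, and since $K^0(X)$ is generated by the classes $[E]$ of algebraic vector bundles $E$ on $X$, it suffices to show that $[E]\in K^0_{\rm sm}(X)$ for each such $E$.

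The main device is the Grassmannian, combined with a twist--untwist trick. First I would fix a very ample line bundle $\mO_X(1)$, giving a closed embedding $j\colon X\hookrightarrow\PP^M$ with $\mO_X(1)=j^*\mO_{\PP^M}(1)$. By Serre's theorem the twist $E(n)=E\otimes\mO_X(n)$ is globally generated for $n\gg 0$, and as $X$ is projective its space of sections is finite dimensional; choosing $N$ generating sections gives a surjection $\mO_X^{\oplus N}\twoheadrightarrow E(n)$. This determines a morphism $f\colon X\to\mathrm{Gr}(r,N)$ to the Grassmannian of rank-$r$ quotients, with $E(n)=f^*Q$ for the universal quotient bundle $Q$. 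As $\mathrm{Gr}(r,N)$ is smooth, $[E(n)]=f^*[Q]$ already lies in $K^0_{\rm sm}(X)$.

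To recover $[E]$ itself I would untwist on the smooth product $Y=\mathrm{Gr}(r,N)\times\PP^M$. Setting $F=(f,j)\colon X\to Y$ and
$$\beta=\mathrm{pr}_1^*[Q]\cdot\mathrm{pr}_2^*[\mO_{\PP^M}(-n)]\in K^0(Y),$$
the fact that tensoring with a line bundle is exact (so that $[A]\cdot[L]=[A\otimes L]$ in $K^0$ and $[\mO_X(n)]$ is a unit) yields
$$F^*\beta=[E(n)]\cdot[\mO_X(-n)]=[E(n)\otimes\mO_X(-n)]=[E].$$
Hence $[E]=F^*\beta\in F^*K^0(Y)\subseteq K^0_{\rm sm}(X)$, which completes the argument.

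The step I expect to require the most care is the untwisting. The Grassmannian construction only produces globally generated bundles directly, whereas a general $E$ is not globally generated; the point is to realize the compensating inverse twist $[\mO_X(-n)]$ as the pullback of an honest line-bundle class on the smooth variety $\PP^M$ and to package both maps into a single morphism to the smooth product $Y$, so that $[E(n)]\cdot[\mO_X(-n)]$ appears as one pullback $F^*\beta$. This avoids having to first establish that $K^0_{\rm sm}(X)$ is closed under products.
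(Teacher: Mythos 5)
Your proposal is correct and takes essentially the same route as the paper: both proofs come down to realizing every vector bundle on $X$ as the pullback of a bundle on a smooth variety along an embedding. The paper simply cites \cite{ArapuraKang} for that realization (whose standard proof is precisely your Grassmannian twist--untwist construction on $\mathrm{Gr}(r,N)\times \PP^M$) and then handles arbitrary $K^0$-classes by embedding $X$ into a product $M_1\times M_2$ to treat a difference of two bundles, whereas you instead invoke the fact that $K^0_{\rm sm}(X)$ is a subgroup; both devices are valid.
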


\begin{proof}
  Recall that for a vector bundle $\scE_i$ on $X$, there exists a smooth
  variety $M_i$, an embedding $\iota : X \to M_i$ and a vector bundle $\sF_i$ on
  $M_i$ such that $\scE_i = \iota^* \sF_i$ (cf. \cite{ArapuraKang}). By taking
  products, we can see that any pair of vector bundles $\scE_i$ are
  the pullbacks  of a pair of vector bundles under an embedding of $X$ into  common smooth variety
  $M=M_1\times M_2$. Since any
  element of $K^0(X)$ is a difference of vector bundles, the lemma follows.
\end{proof}

Given a natural transformation of contravariant functors $G\to G'$, it
is clear that we get a natural transformation $G_{\rm sm}\to G'_{\rm sm}$.

\begin{lemma}\label{lemma:cp-in-smooth-coh}
For any vector bundle $\scE$ on $X$,  $c_p(\scE) \in H^{2p}_{\rm
  sm}(X,\Q)\cap F^p H^{2p}_{\rm sm}(X,\C) $. 
\end{lemma}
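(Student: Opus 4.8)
The plan is to reduce everything to the smooth model produced in the proof of \lemref{lemma:K0sm}. First I would invoke that construction: given the vector bundle $\scE$ on $X$, there is a smooth variety $M$, an embedding $\iota : X \to M$, and a vector bundle $\sF$ on $M$ with $\scE = \iota^*\sF$. Since one is free to choose $M$, I would take it to be smooth projective---for instance a product of a projective space into which $X$ embeds with a Grassmannian classifying a suitable twist of $\scE$---so that $H^{2p}(M,\Q)$ is a pure Hodge structure of weight $2p$.

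For the topological containment I would use functoriality of Chern classes: $c_p(\scE) = c_p(\iota^*\sF) = \iota^* c_p(\sF)$. Because $(M,\iota) \in \mathcal{C}(X)$ and $c_p(\sF) \in H^{2p}(M,\Q)$, this exhibits $c_p(\scE)$ as an element of $\iota^* H^{2p}(M,\Q) \subseteq H^{2p}_{\rm sm}(X,\Q)$, which is the first half of the statement.

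For the Hodge-theoretic containment I would start from the standard fact that on the smooth projective variety $M$ the Chern class $c_p(\sF)$ is a Hodge class, so $c_p(\sF) \in F^p H^{2p}(M,\C) \cap H^{2p}(M,\Q)$. The morphism $\iota^*$ is a morphism of mixed Hodge structures and therefore preserves the Hodge filtration, so $\iota^* c_p(\sF)$ lands in $F^p$. The point requiring care is that $F^p H^{2p}_{\rm sm}(X,\C)$ refers to the Hodge filtration of the \emph{pure} weight $2p$ structure $H^{2p}_{\rm sm}(X,\Q)$, which by the first lemma is identified with a sub-Hodge structure of $\gr^W_{2p} H^{2p}(X,\Q)$. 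I would therefore pass to the weight-$2p$ graded piece: $\gr^W_{2p}\iota^*$ is a morphism of pure Hodge structures preserving $F^p$, and since $c_p(\scE)$ is already pure of weight $2p$ it coincides with its own image in $\gr^W_{2p}$; this compatibility then places it in $F^p H^{2p}_{\rm sm}(X,\C)$.

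The main obstacle is precisely this last compatibility between the Hodge filtration on $H^{2p}(M)$ and the induced filtration on $H^{2p}_{\rm sm}(X)$ sitting inside the mixed structure $H^{2p}(X)$; once the smooth model $M$ is taken projective and strictness of morphisms of mixed Hodge structures on weight-graded pieces is invoked, this is routine. The argument runs parallel to the one already given in \lemref{lemma:contained-in-dr}, with the Hodge filtration here playing the role that the de Rham filtration plays there.
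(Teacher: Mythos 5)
Your proposal is correct and is essentially the paper's own argument in unpacked form: the paper combines $K^0_{\rm sm}(X)=K^0(X)$ (Lemma \ref{lemma:K0sm}) with the remark that the natural transformation $c_p\colon K^0\to \mathcal{H}^{2p}$ (Hodge cycles) induces a map on smooth centers, which is precisely your step of writing $\scE=\iota^*\sF$ for an embedding $\iota\colon X\to M$ into a smooth variety and pulling back the Hodge class $c_p(\sF)$. Your added care about identifying the induced Hodge filtration on the pure structure $H^{2p}_{\rm sm}(X,\Q)$ via strictness of morphisms of mixed Hodge structures is the same compatibility on which the paper's inclusion $\mathcal{H}^{2p}_{\rm sm}(X)\subseteq H^{2p}_{\rm sm}(X,\Q)\cap F^pH^{2p}_{\rm sm}(X,\C)$ implicitly rests.
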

\begin{proof}
Let $\mathcal{H}^{2p}(X) =  H^{2p}(X,\Q)\cap F^p H^{2p}(X,\C) $
denote the space of Hodge cycles, and let $\mathcal{H}_{\rm sm}^{2p}(X) \subseteq H^{2p}_{\rm
  sm}(X,\Q)\cap F^p H^{2p}_{\rm sm}(X,\C) $
denote its smooth center.
$\scE$ can be regarded as an element of $K^0(X)$, which equals $K^0_{\rm
  sm}(X)$ by the previous
lemma. By the above remark, $c_p$ gives a natural transformation
$c_p:K^0_{\rm sm}(X)\to \mathcal{H}^{2p}_{\rm sm}(X)$. 
\end{proof}

\begin{rmk}
A similar argument shows that the image of the $\ell$-adic Chern class map $K^0(X)\otimes \Q_\ell\to H^{2p}(X_{et},\Q)$ lies in $H_{\rm sm}^{2p}(X_{et},\Q)$ when $X$ is defined over an arbitrary algebraically closed field.
\end{rmk}

\begin{cor}\label{cor:cp-in-smooth-coh}
$\im [c_p: K^0(X) \otimes \Q \to H^{2p}(X,\Q)] \subseteq H^{2p}_{\rm sm}(X,\Q) \cap F^p H^{2p}_{\rm sm}(X,\C)$.
If the Hodge conjecture holds in degree $2p$ for all smooth varieties, then the equality holds.
\end{cor}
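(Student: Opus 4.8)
The first containment is essentially already proved. Indeed, the proof of \lemref{lemma:cp-in-smooth-coh} produces a natural transformation $c_p : K^0_{\rm sm}(X) \to \mathcal{H}^{2p}_{\rm sm}(X)$, and combined with $K^0_{\rm sm}(X)=K^0(X)$ from \lemref{lemma:K0sm}, after tensoring with $\Q$ every value $c_p(\xi)$ with $\xi \in K^0(X)\otimes\Q$ lands in $\mathcal{H}^{2p}_{\rm sm}(X) \subseteq H^{2p}_{\rm sm}(X,\Q)\cap F^p H^{2p}_{\rm sm}(X,\C)$. So for the inclusion I would simply record this observation.

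For the equality I must prove the reverse inclusion under the Hodge conjecture. Fix $\alpha \in H^{2p}_{\rm sm}(X,\Q)\cap F^p H^{2p}_{\rm sm}(X,\C)$; since $H^{2p}_{\rm sm}(X,\Q)$ is pure of weight $2p$, $\alpha$ is a Hodge cycle of this Hodge structure. Writing $\alpha = \sum_i f_i^*\beta_i$ with $(Y_i,f_i)\in\mathcal{C}(X)$ and $\beta_i \in H^{2p}(Y_i,\Q)$, I would first collapse this to a single pullback: with $Y=\prod_i Y_i$, $f=(f_i):X\to Y$, and $\beta=\sum_i \pi_i^*\beta_i$ one has $\alpha=f^*\beta$. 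Then I would reduce to a projective target by choosing a smooth projective compactification $\bar{Y}\supseteq Y$: since $W_{2p}H^{2p}(Y)=\im[H^{2p}(\bar{Y})\to H^{2p}(Y)]$ for smooth $Y$, and since $f^*$ is a strict morphism of mixed Hodge structures while $H^{2p}(X)$ carries only weights $\le 2p$ ($X$ being projective), one gets $\im f^* = \im[g^*:H^{2p}(\bar{Y})\to H^{2p}(X)]$, where $g$ is the composite $X\to Y\hookrightarrow \bar{Y}$. Hence I may assume $\alpha$ lies in the image of $g^*$ with $\bar{Y}$ smooth projective.

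The crux, and the step I expect to be the main obstacle, is to lift $\alpha$ through the possibly non-injective map $g^*$ to a genuine Hodge cycle on $\bar{Y}$. Now $g^*:H^{2p}(\bar{Y},\Q)\twoheadrightarrow \im g^*$ is a surjection of pure polarizable $\Q$-Hodge structures of weight $2p$, and $\alpha \in \im g^*$ is a Hodge cycle. Invoking the semisimplicity of the category of polarizable rational Hodge structures, I would split $g^*$ by a section $s$ of Hodge structures and set $\tilde{\beta}:=s(\alpha)$; being the image of a Hodge cycle under a morphism of Hodge structures, $\tilde{\beta}$ is itself a Hodge cycle on $\bar{Y}$ with $g^*\tilde{\beta}=\alpha$. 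The delicate point is precisely that the lift can be chosen to respect the Hodge filtration, which is exactly what purity and polarizability buy us.

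Finally, since $\bar{Y}$ is smooth projective and the Hodge conjecture holds in degree $2p$, the standard reformulation recalled in the introduction (see \cite{ArapuraKang}) expresses $\tilde{\beta}=\sum_j a_j\, c_p(\mathcal{F}_j)$ as a rational linear combination of Chern classes of vector bundles $\mathcal{F}_j$ on $\bar{Y}$. Pulling back then gives $\alpha = g^*\tilde{\beta}=\sum_j a_j\, c_p(g^*\mathcal{F}_j)$, a rational combination of Chern classes of vector bundles on $X$, so that $\alpha \in \im[c_p:K^0(X)\otimes\Q\to H^{2p}(X,\Q)]$. This yields the reverse inclusion, and hence the asserted equality.
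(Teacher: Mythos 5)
Your proof is correct, and it follows the same skeleton as the paper's own proof: realize the class as the pullback of a Hodge cycle on a smooth variety, invoke the Hodge conjecture there in its Chern-class formulation, and pull the Chern classes back. But you execute carefully three steps that the paper's very terse proof glosses over, and as a result you prove the statement under a weaker hypothesis. The paper begins with ``let $\alpha = f^*(\beta)$ where $(Y,f)\in\mathcal{C}(X)$,'' ignoring that an element of $H^{2p}_{\rm sm}(X,\Q)$ is a priori a finite \emph{sum} of pullbacks (your product trick $Y=\prod_i Y_i$ fixes this, just as in \lemref{lemma:K0sm}); it then simply declares $\beta$ to be a Hodge $(p,p)$-cycle on $Y$, which is precisely the delicate point you address: a rational $\beta$ with $f^*\beta\in F^p$ need not itself be a Hodge cycle, and one must replace it by a Hodge cycle with the same image, using strictness together with polarizability/semisimplicity, exactly as in your section $s$. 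Finally, the paper applies the Hodge conjecture directly to $Y\in\mathcal{C}(X)$, which need not be projective --- consistent with the corollary's hypothesis that the conjecture holds ``for all smooth varieties.'' Your compactification step, via Deligne's description $W_{2p}H^{2p}(Y)=\im[H^{2p}(\bar{Y})\to H^{2p}(Y)]$, strictness, and the fact that $H^{2p}(X,\Q)$ has weights $\le 2p$ since $X$ is projective, reduces the whole argument to the smooth \emph{projective} variety $\bar{Y}$, so you only need the Hodge conjecture in its standard form. In short, the paper's route buys brevity; yours buys rigor at the two glossed-over points (single-pullback reduction and the Hodge-cycle lifting) and replaces a nonstandard hypothesis by the usual one.
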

\begin{proof}
It is enough to show the last statement. Let $\alpha =f^*(\beta)  \in H^{2p}_{\rm sm}(X,\Q) \cap F^pH^{2p}_{\rm sm}(X,\C)$ where $(Y, f) \in \mathcal{C}(X)$. Then $\beta$ is a Hodge $(p,p)$-cycle on $Y$, and the Hodge conjecture for $Y$ implies that $\beta = \sum_i c_p(\scE_i)$ where $\scE_i$'s are vector bundles on $Y$.  Hence, $\alpha =  f^*(\beta) = \sum_i c_p\left( f^*(\scE_i) \right)  \in \im \left[ c_p : K^0(X) \otimes \Q \to H^{2p}(X,\Q)\right]$.
\end{proof}

\begin{cor}\label{cor:c1toH2sm}
$c_1: K^0(X) \otimes \Q \to F^1H^2_{\rm sm}(X,\C) \cap H^2_{\rm sm}(X,\Q)$ is a surjection. 
\end{cor}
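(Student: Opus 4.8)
The plan is to deduce this from \coref{cor:cp-in-smooth-coh} in the case $p=1$. That corollary already supplies the inclusion $\im[c_1]\subseteq F^1H^2_{\rm sm}(X,\C)\cap H^2_{\rm sm}(X,\Q)$ and tells us that the inclusion becomes an equality—i.e. $c_1$ is onto—once the Hodge conjecture is available in degree $2$ for the smooth varieties occurring in $\mathcal{C}(X)$. Since the Hodge conjecture in degree $2$ is exactly the Lefschetz theorem on $(1,1)$-classes, the only thing I really have to arrange is that this classical theorem can be applied in the present, possibly non-compact, setting. So the proof splits into a product reduction, a reduction to projective targets, and a direct appeal to Lefschetz $(1,1)$.

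First I would reduce a general element of the target to a single pullback. Given $\alpha\in F^1H^2_{\rm sm}(X,\C)\cap H^2_{\rm sm}(X,\Q)$, by definition $\alpha=\sum_k f_k^*\beta_k$ with $(Y_k,f_k)\in\mathcal{C}(X)$ and $\beta_k\in H^2(Y_k,\Q)$. Replacing the $Y_k$ by their product and the $f_k$ by the single morphism $f=(f_1,\dots):X\to\prod_k Y_k$, and using $f_k^*=f^*\circ\mathrm{pr}_k^*$ exactly as in the proof of \lemref{lemma:K0sm}, I may assume $\alpha=f^*\beta$ for one pair $(Y,f)$ with $Y$ smooth and $\beta\in H^2(Y,\Q)$.

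The key step is to replace $Y$ by a smooth projective variety. Choosing a smooth projective compactification $j:Y\hookrightarrow\bar Y$ (Hironaka), the composite $\bar f=j\circ f:X\to\bar Y$ again lies in $\mathcal{C}(X)$. Because $X$ is projective, $H^2(X,\Q)$ has weights $\le 2$, whereas the smooth variety $Y$ has $H^2(Y,\Q)$ of weights $\ge 2$ with $W_2H^2(Y,\Q)=\im[j^*]$; as $f^*$ is a morphism of mixed Hodge structures and hence strict for the weight filtration, $f^*H^2(Y,\Q)=f^*\bigl(W_2H^2(Y,\Q)\bigr)=\bar f^*H^2(\bar Y,\Q)$. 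Thus $\alpha=\bar f^*\bar\beta$ for some $\bar\beta\in H^2(\bar Y,\Q)$, and $M=\bar f^*H^2(\bar Y,\Q)$ is a pure weight $2$ sub-Hodge structure of $\gr^W_2H^2(X,\Q)$ containing $\alpha$. Since $\alpha$ is rational and lies in $F^1$, it is a Hodge class of $M$; because pure polarizable Hodge structures are semisimple, the surjection $\bar f^*:H^2(\bar Y,\Q)\to M$ carries some Hodge class $\bar\beta$ of $\bar Y$ onto $\alpha$. Now Lefschetz $(1,1)$ writes $\bar\beta=\sum_i a_ic_1(L_i)$ with $L_i$ line bundles on $\bar Y$ and $a_i\in\Q$, whence $\alpha=\sum_i a_ic_1(\bar f^*L_i)\in\im[c_1]$, as desired.

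I expect the only genuine obstacle to be the passage from a general smooth $Y$ to a projective one, that is, justifying that the classical Lefschetz $(1,1)$-theorem is enough even though $\mathcal{C}(X)$ permits non-compact targets. The weight-strictness argument is what makes this work: pulling back to the projective variety $X$ annihilates everything of weight $>2$, so only the part of $H^2(Y,\Q)$ coming from a compactification survives, and that part is controlled by a smooth projective variety. Everything else—the product reduction, the lifting of a Hodge class through a surjection of pure Hodge structures, and the final application of Lefschetz $(1,1)$—is routine.
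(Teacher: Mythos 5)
Your proof is correct, but it takes a genuinely different route from the paper's. The paper's proof is a two-line sandwich: \coref{cor:cp-in-smooth-coh} gives the containment $\im(c_1)\subseteq F^1H^2_{\rm sm}(X,\C)\cap H^2_{\rm sm}(X,\Q)$, \lemref{lemma:contained-in-dr} gives $F^1H^2_{\rm sm}(X,\C)\cap H^2_{\rm sm}(X,\Q)\subseteq F^1_{\rm DR}H^2(X,\C)\cap \gr^W_2H^2(X,\Q)$, and the right-hand side is exactly $\im(c_1)$ by \cite[Lemma 3.14]{ArapuraKang} --- a Lefschetz $(1,1)$ theorem for singular varieties phrased via the de Rham filtration on the cohomology of K\"ahler differentials. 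You never touch $F_{\rm DR}$ or that citation; instead you prove surjectivity directly: the product trick reduces to a single class $f^*\beta$; weight strictness together with Deligne's description $W_2H^2(Y,\Q)=\im[H^2(\bar Y,\Q)\to H^2(Y,\Q)]$ replaces the possibly non-compact target by a compactification $\bar Y$; semisimplicity of polarizable pure Hodge structures lifts the Hodge class $\alpha$ through the surjection $\bar f^*:H^2(\bar Y,\Q)\twoheadrightarrow M$ to a Hodge class on $\bar Y$; and classical Lefschetz $(1,1)$ finishes. What your route buys: it is self-contained, and it supplies precisely the two ingredients (strictness, plus semisimple lifting of Hodge classes through a surjection) that the paper's own proofs of \lemref{lemma:contained-in-dr} and \coref{cor:cp-in-smooth-coh} gloss over when they assert that $\alpha=f^*(\beta)\in F^p$ forces $\beta$ itself to be a Hodge cycle; it also confronts the non-compactness of targets in $\mathcal{C}(X)$, which the paper never addresses explicitly. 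What the paper's route buys: brevity, and the intermediate statement --- central to its theme --- that the Hodge classes of $H^2_{\rm sm}$ are caught by the de Rham filtration.

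One small repair to your write-up: objects of $\mathcal{C}(X)$ are arbitrary nonsingular varieties, not necessarily quasi-projective, so ``smooth projective compactification (Hironaka)'' should be Nagata compactification followed by resolution, and the resulting $\bar Y$ is smooth and complete but perhaps not projective. Nothing breaks: $H^2(\bar Y,\Q)$ is still pure and polarizable (it injects, as a Hodge structure, into $H^2$ of a smooth projective modification obtained from Chow's lemma plus resolution), and Lefschetz $(1,1)$ holds on any smooth complete variety via the exponential sequence, Hodge--de Rham degeneration, and GAGA. With that adjustment your argument is complete.
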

\begin{proof}
It follows immediately from Lemma \ref{lemma:contained-in-dr} and Corollary \ref{cor:cp-in-smooth-coh}, since $\im(c_1) = F^1_{\rm DR}H^2(X,\C) \cap \gr^W_2 H^2(X,\Q)$ by \cite[Lemma 3.14]{ArapuraKang}.
\end{proof}

By putting Lemma \ref{lemma:cp-in-smooth-coh} and Corollary \ref{cor:cp-in-smooth-coh} together, we can recover the one of the main
results of \cite{ArapuraKang} that the image of $c_p$ lies in
$F_{\rm DR}^p H^{2p}(X)\cap H^{2p}(X,\Q)$.

\section{$H_{\text{sm}}^1(C)$ of a singular curve}\label{SEC000INTRO}

In this section, we calculate $H_{\text{sm}}^1(C,\Q)$, when $C$ is a curve.
For every smooth projective variety $Y$, let  $\alpha: Y\to A(Y)$
denote the Albanese map, then it is well known that
\begin{equation}\label{E000750}
H^1(Y,\BQ) = \alpha^* H^1(A(Y), \BQ).
\end{equation}
Therefore, we conclude
\begin{equation}\label{E000749}
H_{\text{sm}}^1(X, \BQ) = \sum_{f} f^* H^1(A, \BQ)
\end{equation}
where $f: X\to A$ runs over all morphisms $f$ from $X$ to an abelian variety $A$.

Now let $C$ be a singular curve and $\nu: \Gamma\to C$ be its normalization. We observe that
for every map $f: C\to A$ from $C$ to an abelian variety $A$, the corresponding
$f\circ \nu:\Gamma\to A$ has to factor through the Albanese $A(\Gamma)$, i.e., the Jacobian
$J(\Gamma)$ of $\Gamma$. If $J(\Gamma)$ is a simple abelian variety, either the map is constant or $J(\Gamma)$ is isogenous onto its image. Since $J(\Gamma)$ is simple for a very general curve $\Gamma$,
this line of argument leads to the conclusion that $H_{\text{sm}}^1(C,\Q)
= 0$ for such a curve.
More precisely, we have

\begin{prop}\label{PROP000800}
Let $C$ be a singular, integral and projective curve with normalization $\nu: \Gamma\to C$.
Suppose that the Jacobian $J(\Gamma)$ of $\Gamma$ is a simple abelian variety.
If $H_{{\rm sm}}^1(C, \BQ) \ne 0$, then
\begin{enumerate}
\item[(a)]
$\nu: \Gamma \to C$ is an immersion at every point $p\in \Gamma$ in
the sense that 
the map $\nu^* \Omega_C^1\to \Omega_{\Gamma}^1$ on K\"ahler differentials
is surjective at every point $p\in \Gamma$;
\item[(b)] $p - q$ is torsion in $\Pic(\Gamma)$ for all $p, q\in \Gamma$ satisfying
$\nu(p) = \nu(q)$.
\end{enumerate}
In particular, $H_{{\rm sm}}^1(C, \BQ) = 0$ for a singular curve $C$ whose normalization
is a very general curve of genus $\ge 3$.
\end{prop}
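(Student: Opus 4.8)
The plan is to exploit the description \eqref{E000749} of $H^1_{\rm sm}$ together with the universal property of the Jacobian, transporting everything to $\Gamma$ through the normalization. First I would record that $\nu^*\colon H^1(C,\BQ)\to H^1(\Gamma,\BQ)$ is injective on $H^1_{\rm sm}(C,\BQ)$: the normalization sequence $0\to \BQ_C\to \nu_*\BQ_\Gamma\to \F\to 0$ (with $\F$ a skyscraper supported on the singular locus) yields a long exact sequence showing $\nu^*$ is surjective with kernel $W_0H^1(C,\BQ)$; since $H^1_{\rm sm}(C,\BQ)$ is pure of weight $1$, it meets $W_0$ trivially, so $\nu^*$ is injective on it. Now assume $H^1_{\rm sm}(C,\BQ)\ne 0$. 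By \eqref{E000749} there is a morphism $f\colon C\to A$ to an abelian variety with $f^*H^1(A,\BQ)\ne 0$, and composing with $\nu$ and using the universal property of the Albanese gives $f\circ\nu=h\circ\alpha$ for a homomorphism (up to translation) $h\colon J(\Gamma)\to A$. Since $\alpha^*$ is an isomorphism by \eqref{E000750}, injectivity of $\nu^*$ forces $h^*H^1(A,\BQ)\ne 0$, so $h$ is nonconstant; as $J(\Gamma)$ is simple, $h$ must be an isogeny onto its image. In particular $\ker h$ is finite and $h$ is unramified onto its image.

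Both (a) and (b) should then fall out of this single good $f$. For (b), if $\nu(p)=\nu(q)$ then $h(\alpha(p))=h(\alpha(q))$, so the class $[p-q]=\alpha(p)-\alpha(q)$ lies in the finite group $\ker h$ and is therefore torsion in $\Pic^0(\Gamma)\subseteq\Pic(\Gamma)$. For (a), the Albanese map $\alpha$ is unramified, its cotangent map being the evaluation map $H^0(\Gamma,\Omega^1_\Gamma)\otimes\mO_\Gamma\to\Omega^1_\Gamma$ of the globally generated sheaf $\Omega^1_\Gamma$ (globally generated since $\dim J(\Gamma)\ge 1$), and $h$ is unramified onto its image; hence $f\circ\nu=h\circ\alpha$ is unramified at every point. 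Thus the cotangent map $(f\circ\nu)^*\Omega^1_A\to\Omega^1_\Gamma$ is surjective everywhere, and since it factors as $\nu^*f^*\Omega^1_A\to \nu^*\Omega^1_C\to\Omega^1_\Gamma$, the second arrow $\nu^*\Omega^1_C\to\Omega^1_\Gamma$ is forced to be surjective, which is precisely the immersion statement (a).

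For the final assertion I would first note that $J(\Gamma)$ is simple for a very general $\Gamma$, so the hypotheses apply. If $H^1_{\rm sm}(C,\BQ)\ne 0$, then (a) rules out cuspidal points of $\nu$, so the singularity of $C$ comes from a genuine identification $\nu(p)=\nu(q)$ with $p\ne q$, and (b) produces a nonzero torsion class $[p-q]$. It therefore suffices to show that a very general curve of genus $\ge 3$ admits no distinct $p,q$ with $[p-q]$ torsion, and I expect this to be the main obstacle. The bound $g\ge 3$ is essential: on any hyperelliptic curve the Weierstrass points satisfy $2(w_i-w_j)\sim 0$, and for $g=2$ the difference map $\Gamma\times\Gamma\to J(\Gamma)$ is even dominant, so torsion differences are unavoidable in low genus. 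For $g\ge 3$ the difference surface $\Sigma=\Gamma-\Gamma$ has dimension $2<g=\dim J(\Gamma)$, so for simple $J(\Gamma)$ the Manin--Mumford theorem forces $\Sigma\cap\mathrm{Tors}$ to be finite. To upgrade finiteness to emptiness away from $0$, I would argue that for each $n$ the locus $Z_n\subseteq\m_g$ of curves carrying distinct $p,q$ with $n(p-q)\sim 0$ (equivalently, a degree-$n$ pencil totally ramified at two points) is a proper closed subvariety, so that a very general curve lies outside the countable union $\bigcup_n Z_n$ and hence has no torsion differences. Establishing the properness of each $Z_n$, whether by a parameter count or by exhibiting for each genus a single curve free of such relations, is the technical heart of this last step.
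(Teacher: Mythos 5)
Your treatment of parts (a) and (b) is correct and follows essentially the same route as the paper: the universal property of the Jacobian produces $h\colon J(\Gamma)\to A$ with $f\circ\nu=h\circ\alpha$, simplicity of $J(\Gamma)$ forces $h$ to be an isogeny onto its image, \'etaleness of that isogeny together with unramifiedness of $\alpha$ gives (a), and finiteness of $\ker h$ gives (b). (The paper obtains (b) by composing with the dual isogeny so that $\rho^\vee\circ\rho=[n]$, and obtains nonconstancy of $\rho$ from surjectivity of $\nu$ rather than from your weight argument for injectivity of $\nu^*$ on $H^1_{\rm sm}$; these are cosmetic differences.)

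The genuine gap is in the final assertion. You reduce, exactly as the paper does, to showing that a very general curve of genus $g\ge 3$ carries no distinct points $p,q$ with $n(p-q)\sim 0$, equivalently no degree-$n$ pencil totally ramified at two points; but you then leave the properness of each locus $Z_n\subset\m_g$ unproven, explicitly calling it ``the technical heart.'' That is precisely the step where the work lies, and your Manin--Mumford detour cannot close it: it yields only finiteness of torsion differences on a fixed curve, not emptiness, as you yourself note. The missing count is short and is what the paper supplies: if $j\colon\Gamma\to\PP^1$ has degree $n>1$ and is totally ramified at $p$ and $q$, Riemann--Hurwitz gives total ramification degree $2g-2+2n$, of which $p$ and $q$ absorb $2(n-1)$; hence $j$ has at most $2g+2$ ramification (and branch) points, so modulo $\Aut(\PP^1)$ such covers move in a family of dimension at most $(2g+2)-3=2g-1$, and their images sweep out a subvariety of $\m_g$ of dimension at most $2g-1<3g-3=\dim\m_g$ for $g\ge 3$. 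Taking the countable union over $n$ then shows a very general $\Gamma$ lies outside every $Z_n$. Without this dimension count (or a substitute, such as exhibiting for each genus a single curve with no nonzero torsion differences), your proof of the last statement is incomplete.
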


\begin{proof}
By \eqref{E000749}, $f^*H^1(A, \BQ) \ne 0$ for some morphism
$f: C\to A$ from $C$ to an abelian variety $A$.
By the universality of the Jacobian, we have the commutative diagram
\begin{equation}\label{E000705}
\xymatrix{
\Gamma \ar[r]^\nu \ar[d]^{\alpha} & C\ar[r]^f & A\\
J(\Gamma) \ar[urr]_{\rho}
}
\end{equation}
Since $J(\Gamma)$ is simple, $\rho$ maps $J(\Gamma)$ to a point or 
onto an abelian variety $B\subset A$ isogenously.
If it is the former, then $f$ is constant and $f^*H^1(A, \BQ) = 0$. 
Since $f^*H^1(A, \BQ) \ne 0$, we must have the latter.
Then $f(C)\subset B$ and we have the diagram
\begin{equation}\label{E000748}
\xymatrix{
\Gamma \ar[r]^\nu \ar[d]^{\alpha} & C\ar[r]^f & B \ar[r]^{\rho^\vee} & J(\Gamma)\\
J(\Gamma) \ar[urr]^{\rho} \ar[urrr]_{\phi}
}
\end{equation}
where $\rho: J(\Gamma)\to B$ is an isogeny, $\rho^\vee: B\to J(\Gamma)$ is its
dual and hence $\phi = \rho^\vee\circ \rho = [n]$ is multiplication by a
nonzero integer $n$.

Since $\alpha: \Gamma\hookrightarrow
J(\Gamma)$ is an embedding and $\rho: J(\Gamma)\to B$ is \'etale, it follows that
$\rho\circ \alpha: \Gamma\to B$ is an immersion. Consequently, $\nu: \Gamma\to C$
is an immersion.

For two points $p$ and $q$ on $\Gamma$ satisfying $\nu(p) = \nu(q)$,
we have 
\begin{equation}\label{E000700}
\phi(\alpha(p)) = \phi(\alpha(q)) \Leftrightarrow n(p-q) = 0 
\text{ in }
\Pic(\Gamma)
\end{equation}
i.e., $p-q$ is torsion in $\Pic(\Gamma)$.

For a very general curve $\Gamma$, $J(\Gamma)$ is simple. We claim that there do not exist
$p\ne q$ on $\Gamma$ such that $p - q$ is torsion if $g = g(\Gamma)\ge 3$. Otherwise,
$n(p - q) = 0$ in $\Pic(\Gamma)$ for some $n\in \BZ^+$. Then we have a map $j: \Gamma\to \PP^1$
of degree $n$ totally ramified at $p$ and $q$. Clearly, $n > 1$ and $j$ has at most
$2g + 2$ ramification points. A dimension count shows such $\Gamma$ lies in a subvariety of
dimension $2g-1$ in
the moduli space $\m_g$ of genus $g$ curves. On the other hand, $\dim \m_g = 3g - 3 > 2g - 1$.
\end{proof}

Therefore, to have nontrivial $H_{\text{sm}}^1(C,\Q)$, we need either 
a non-simple Jacobian $J(\Gamma)$ or 
torsion classes $p - q$ for all $p\ne q$ on $\Gamma$ over a singular point of $C$.
Note that $J(\Gamma)$ will fail to be simple if we have a finite map $\Gamma\to Y$
from $\Gamma$ to a smooth projective curve $Y$ with $g(\Gamma) > g(Y) > 0$. This leads us
to consider singular curves $C$ admitting a finite map $C\to Y$ to a
smooth curve $Y$. Then we obtain a map $\Gamma\to Y$, where $\Gamma$
is the normalization. 
We can see that $G$ is simple if the map
$f:\Gamma\to Y$ is sufficiently general, where $G$ is the connect component of $\ker [J(\Gamma)\to J(Y)]$ containing the identity.

\begin{prop}\label{PROP801}
Let $\varphi: C \to Y$ be a finite map from an integral projective curve $C$ to a smooth curve $Y$.
Let $\nu: \Gamma\to C$ be the normalization of $C$ and let
\begin{equation}\label{E000791}
J(\Gamma) \xrightarrow{\phi \times g_1\times g_2\times ...\times g_m} J(Y)\times G_1\times G_2\times ... \times G_m
\end{equation}
be an isogeny, where $\phi = (\varphi\circ \nu)_*$ and
$G_1, ..., G_m$ are simple abelian varieties such that $G = G_1\times G_2\times ...\times G_m$ is isogenous to
the connected component of the kernel of $J(\Gamma)\to J(Y)$ containing the identity.
Suppose that there is a node $q\in C$ with $\nu^{-1}(q) = \{p_1, p_2\}$ such that
$g_k(p_1- p_2)$ is non-torsion in $G_k$ for all $k=1,2,...,m$.
Then
\begin{equation}\label{E000704}
H_{{\rm sm}}^1(C, \BQ) = \varphi^* H^1(Y, \BQ).
\end{equation}
\end{prop}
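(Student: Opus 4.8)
The plan is to prove the two inclusions separately. The inclusion $\varphi^* H^1(Y,\BQ) \subseteq H_{\rm sm}^1(C,\BQ)$ is immediate from the definition, since $Y$ is smooth and $\varphi$ defines a pair in $\mathcal{C}(C)$. For the reverse inclusion I would invoke \eqref{E000749}: it suffices to show that $f^* H^1(A,\BQ) \subseteq \varphi^* H^1(Y,\BQ)$ for every morphism $f\colon C\to A$ to an abelian variety. The first reduction is to pass to the normalization. Since $H_{\rm sm}^1(C,\BQ)$ is pure of weight $1$ (by the purity lemma of Section 1) while $\ker[\nu^*\colon H^1(C,\BQ)\to H^1(\Gamma,\BQ)] = W_0 H^1(C,\BQ)$, the map $\nu^*$ is injective on $H_{\rm sm}^1(C,\BQ)$. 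As both $f^* H^1(A,\BQ)$ and $\varphi^* H^1(Y,\BQ)$ lie in $H_{\rm sm}^1(C,\BQ)$, the desired inclusion is equivalent to the inclusion of their images under $\nu^*$, i.e. to
$$(f\circ\nu)^* H^1(A,\BQ) \subseteq (\varphi\circ\nu)^* H^1(Y,\BQ) \quad\text{inside } H^1(\Gamma,\BQ).$$

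Next I would bring in the Jacobian. The map $f\circ\nu$ factors as $\rho\circ\alpha$ up to a translation, for a homomorphism $\rho\colon J(\Gamma)\to A$, so $(f\circ\nu)^* H^1(A,\BQ) = \alpha^*\rho^* H^1(A,\BQ)$, and likewise $(\varphi\circ\nu)^* H^1(Y,\BQ) = \alpha^*\phi^* H^1(J(Y),\BQ)$. Using the isogeny \eqref{E000791} I would decompose
$$H^1(J(\Gamma),\BQ) = \phi^* H^1(J(Y),\BQ) \oplus \bigoplus_{k=1}^m g_k^* H^1(G_k,\BQ)$$
as rational Hodge structures, the first summand pulling back under $\alpha^*$ to $(\varphi\circ\nu)^* H^1(Y,\BQ)$. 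Under the standard dictionary between sub-Hodge-structures of the $H^1$ of an abelian variety and its abelian subvarieties (concretely, $\rho^* H^1(A,\BQ)$ is the annihilator of $H_1(\ker(\rho)^0)$), the target inclusion becomes the purely geometric statement that $\rho$ \emph{kills} $G$, i.e. $\rho|_G = 0$, where $G\sim\prod_k G_k$ is isogenous to $\ker[J(\Gamma)\to J(Y)]^0$.

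The geometric input from the node enters here. Because $f$ is a morphism on $C$ and $\nu(p_1)=\nu(p_2)=q$, we get $\rho(\alpha(p_1))=\rho(\alpha(p_2))$, i.e. $\rho([p_1-p_2])=0$; and because $\varphi\circ\nu$ is constant on $\{p_1,p_2\}$ we get $\phi([p_1-p_2])=0$, so that $x:=[p_1-p_2]$ lies in $G$ modulo torsion. Thus $\rho|_G$ is a homomorphism vanishing on $x$, while the hypothesis says $g_k(x)$ is non-torsion for every $k$. I would finish by contradiction: if $\rho|_G\ne 0$, set $K=\ker(\rho|_G)^0\subsetneq G$; then $x$ maps to a torsion point of $G/K$, while by Poincaré reducibility and the simplicity of the $G_k$ the nonzero abelian variety $G/K$ is isogenous to a nonempty sub-product of the $G_k$, forcing $g_k(x)$ to be torsion for at least one $k$ — contradicting the hypothesis, so $\rho|_G=0$ as required.

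The main obstacle is exactly this last step: controlling $\rho|_G$ from the single relation $\rho(x)=0$. A single node gives only one linear condition on $\rho$, so the argument must rule out the possibility that $\rho$ is nonzero on $G$ yet annihilates $x$ through cancellation among the simple factors. This is precisely why the hypothesis demands that $g_k(p_1-p_2)$ be non-torsion for \emph{every} $k$, and why one must work with the isotypic (simple) decomposition of $G$: whichever factors of $G$ survive in the image $\rho(G)$, their contribution to $x$ must be non-torsion, which is incompatible with $\rho(x)=0$. The remaining points — that the displayed decomposition of $H^1(\Gamma,\BQ)$ is one of Hodge structures, and that $\nu^*$ really is injective on the weight-one piece — are routine given the purity lemma and standard facts about Albanese maps.
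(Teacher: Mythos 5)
Your overall strategy is the same as the paper's: reduce via \eqref{E000749} to showing $f^*H^1(A,\BQ)\subseteq\varphi^*H^1(Y,\BQ)$ for every morphism $f\colon C\to A$ to an abelian variety, factor $f\circ\nu$ through the Jacobian embedding $\alpha\colon\Gamma\to J(\Gamma)$, split $H^1(J(\Gamma),\BQ)$ using the isogeny \eqref{E000791}, and use the node to show that the ``$G$-component'' of $\rho$ vanishes. Your $\rho|_G$ is exactly the paper's $\eta=\rho\circ g^\vee$, and your preliminary reductions (injectivity of $\nu^*$ on pure weight-one pieces, the dictionary between sub-Hodge structures of $H^1$ of an abelian variety and abelian subvarieties) are correct and, if anything, spelled out more carefully than in the paper.

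The problem is the last step, the one you yourself flag as the main obstacle. The claim ``$G/K$ is isogenous to a nonempty sub-product of the $G_k$, forcing $g_k(x)$ to be torsion for at least one $k$'' is false in general: the surjection $G\to G/K\sim\prod_{k\in S}G_k$ need not factor, even up to isogeny, through the coordinate projections $g_k$ unless the $G_k$ are pairwise non-isogenous. Concretely, if $G_1\sim G_2\sim E$ and $K\subset G_1\times G_2$ is a diagonal copy of $E$, then $G/K\cong E$ via $(x_1,x_2)\mapsto x_1-x_2$, and the image of $x$ in $G/K$ is $g_1(x)-g_2(x)$, which can vanish while both $g_1(x)$ and $g_2(x)$ are non-torsion. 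So no contradiction materializes when some of the $G_k$ are mutually isogenous; to exclude this one needs the stronger hypothesis that $\lambda_1(g_1(x))+\dots+\lambda_m(g_m(x))$ is non-torsion for all homomorphisms $\lambda_k$, not all zero (equivalently, that $x$ generates a Zariski-dense subgroup of $\ker(\phi)^0$). To be fair, the paper's own proof has exactly the same gap: from $\nu(p_1)=\nu(p_2)$ one only gets $\sum_j\eta_j(g_j(x))=0$, yet the paper asserts $\eta_k(g_k(\alpha(p_1)))=\eta_k(g_k(\alpha(p_2)))$ for the single index $k$ where $\eta_k$ is non-constant, thereby ignoring the same possible cancellation among isogenous factors. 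Both your argument and the paper's are complete precisely when the $G_k$ are pairwise non-isogenous --- in particular when $m=1$, which is the case actually used in Corollary \ref{COR800}.
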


\begin{proof}
It suffices to prove that
\begin{equation}\label{E000780}
f^* H^1(A, \BQ) \subset \varphi^* H^1(Y, \BQ)
\end{equation}
for all maps $f: C\to A$ from $C$ to an abelian variety $A$. For such $f: C\to A$,
we again have the diagram
\eqref{E000705}. Combining it with the isogeny between $J(\Gamma)$ and $J(Y)\times G$, we have
\begin{equation}\label{E000790}
\xymatrix{
\Gamma \ar[r]^\nu \ar[d]_{\alpha} & C\ar[r]^f & A\\
J(\Gamma) \ar[urr]^{\rho}\ar[d]_{\phi\times g}\\
J(Y)\times G\ar[uurr]_{\varepsilon\times\eta}
}
\end{equation}
where $g = g_1\times g_2 \times ...\times g_m$,
$\varepsilon = \rho\circ \phi^\vee$ and $\eta = \eta_1\times\eta_2\times ...\times \eta_m = \rho\circ g^\vee$
with $\eta_k = g_k^\vee$ for $k=1,2,...,m$.

Clearly, \eqref{E000780} holds if $\eta$ is constant. Suppose that $\eta$ is non-constant. Then
$\eta_k$ is non-constant for some $1\le k\le m$. Since $G_k$ is simple, $\eta_k$ is an isogeny between
$G_k$ and $\eta_k(G_k)$.

Since $\nu(p_1) = \nu(p_2)$, $\eta_k\circ g_k\circ \alpha(p_1) = \eta_k\circ g_k\circ \alpha(p_2)$.
Using the same argument as in the proof of \propref{PROP000800}, we see that $g_k(\alpha(p_1)) - g_k(\alpha(p_2))$
is torsion in $G$.
Contradiction. Therefore, $\eta$ must be constant and \eqref{E000780} holds.
\end{proof}

\begin{cor}\label{COR800}
Let $Y$ be a smooth irreducible projective curve. For a very general nodal curve $C$ that is finite over $Y$
with a map $\varphi: C\to Y$, \eqref{E000704} holds.
\end{cor}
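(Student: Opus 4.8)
The plan is to verify the hypotheses of \propref{PROP801} for a very general nodal $C$ finite over $Y$; the equality \eqref{E000704} then follows immediately. Since a single node satisfying those hypotheses suffices, I fix the degree $d$ of $\varphi$ and the genus of $\Gamma$, mark one node $q$, and regard the data as a point of the space $\mathcal B$ of triples $(f\colon\Gamma\to Y,\,p_1,p_2)$ with $f=\varphi\circ\nu$, $f(p_1)=f(p_2)$ and $p_1\ne p_2$; any further nodes are irrelevant. For a general cover the monodromy group is the full symmetric group $S_d$, so I work over the corresponding component $\mathcal H$ of the Hurwitz space of simply branched covers, which is irreducible. There $\Gamma\times_Y\Gamma$ has exactly two components, the diagonal $\Delta$ and an irreducible off-diagonal locus $W$ (because $S_d$ is $2$-transitive), and $\mathcal B$ is the total space of $W$ over $\mathcal H$; hence $\mathcal B$ is irreducible and ``very general $C$'' means ``very general $b\in\mathcal B$''.

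By the remark preceding the corollary, $G$ is simple for a sufficiently general cover, so after removing a countable union of proper closed subsets of $\mathcal B$ I may assume $m=1$ and $g=g_1\colon J(\Gamma)\to G$. The only remaining hypothesis of \propref{PROP801} is then that $g(p_1-p_2)$ be non-torsion in $G$. Over a dense open of $\mathcal B$ the varieties $G_b$ form a relative abelian scheme $\mathcal G$, carrying the section $s(b)=g_b\big(\alpha(p_1)-\alpha(p_2)\big)$; for each $N$ the locus $\{b:N\,s(b)=0\}=s^{-1}(\mathcal G[N])$ is closed. Thus the torsion locus is a countable union of closed subsets, and to see that its complement is very general it is enough to exhibit a single $b_0$ with $s(b_0)$ non-torsion, for then every $s^{-1}(\mathcal G[N])$ is proper.

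The main obstacle is producing this $b_0$, that is, a cover carrying a fibre-pair with non-torsion Prym class. I would fix a general cover $f_0\colon\Gamma_0\to Y$ with $g(\Gamma_0)>g(Y)$ (so that $G_0\ne0$ and $G_0$ is simple) and consider
\[
\Psi\colon W_0\longrightarrow G_0,\qquad \Psi(x_1,x_2)=g_0\big(\alpha(x_1)-\alpha(x_2)\big)=h(x_1)-h(x_2),
\]
where $W_0$ is the off-diagonal locus of $\Gamma_0\times_Y\Gamma_0$ and $h=g_0\circ\alpha$. If $\Psi$ is non-constant its image is a positive-dimensional subvariety of $G_0$, hence not contained in the countable torsion subgroup, so some fibre-pair $(p_1,p_2)$ gives a non-torsion class and furnishes $b_0$. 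Everything therefore reduces to showing $\Psi\not\equiv\text{const}$. Since $W_0$ is irreducible, constancy would force $h(x_1)-h(x_2)$ to equal a fixed $c$ for all off-diagonal pairs in a general fibre; for $d\ge3$, comparing the pairs $(p_1,p_2),(p_2,p_3),(p_1,p_3)$ gives $c=2c$, so $c=0$ and $h$ is constant on the fibres of $f_0$, whence $h$ factors through $f_0$ and $h^*H^1(G_0)\subseteq f_0^*H^1(Y)$, contradicting that $g_0^*H^1(G_0)$ is a nonzero complement of $f_0^*H^1(Y)$. The degree-two case must be handled separately, using that $h(\iota x)=-h(x)$ for the Prym involution $\iota$, so that $\Psi(x,\iota x)=2h(x)$ is visibly non-constant. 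I expect this non-constancy of $\Psi$ to be the technical heart of the proof; granting it, and intersecting the simplicity locus with the non-torsion locus, \propref{PROP801} applies to a very general $C$ and yields \eqref{E000704}.
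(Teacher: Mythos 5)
Your proposal is correct and shares the paper's skeleton---reduce \eqref{E000704} to \propref{PROP801} by checking that for a very general member the complement $G$ is simple and the node class is non-torsion---but it diverges at exactly the step that carries the real content. The paper's own proof is much shorter: it simply \emph{asserts} that for very general $C$ the class $p_1-p_2$ is non-torsion in $J(\Gamma)$, then notes that $\phi(p_1-p_2)=0$ (the two points lie in one fiber of $\varphi\circ\nu$) and that the isogeny $\phi\times g$ preserves non-torsion, so $g(p_1-p_2)$ is non-torsion in $G$. This transfer trick also shows that your target condition ($g(p_1-p_2)$ non-torsion in $G$) is \emph{equivalent} to the paper's ($p_1-p_2$ non-torsion in $J(\Gamma)$), so both proofs aim at the same fact; what you do differently is actually prove it. You organize the data into an irreducible parameter space $\mathcal{B}$ (a full-monodromy Hurwitz component with a marked fiber pair, irreducible by $2$-transitivity of $S_d$), observe that the torsion locus is a countable union of closed subsets, and exhibit a single non-torsion member by showing the fiberwise difference map $\Psi(x_1,x_2)=h(x_1)-h(x_2)$ is non-constant (the three-point trick $c=2c$ for $d\ge 3$, the Prym involution for $d=2$), so its image is an uncountable curve that cannot lie in the countable torsion subgroup. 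This is a genuine gain: the dimension count used in \propref{PROP000800} does not apply here, since $\Gamma$ is constrained to cover $Y$ and $p_1,p_2$ to lie in a common fiber, so the paper's non-torsion assertion is left without justification and your argument supplies precisely the missing step. The trade-offs are that your route needs irreducibility of the Hurwitz component and settles ``very general'' only within the simply-branched, full-monodromy family (the paper leaves the choice of family implicit, and both you and the paper rely on the unproved remark that $G$ is simple for a general cover); conversely, borrowing the paper's transfer through $\phi\times g$ would let you run the non-constancy argument for the difference map valued in $J(\Gamma)$ itself and avoid discussing how $g_b$ and the abelian scheme $\mathcal{G}$ vary over $\mathcal{B}$.
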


\begin{proof}
Using the same notations as in \propref{PROP801}, we let $\nu: \Gamma\to C$ be the normalization of $C$.
Since $C$ is very general, $\Gamma$ is very general. Hence the connected component of $\ker[J(\Gamma)\to J(Y)]$ containing the
identity is simple and we have an isogeny $\phi\times g: J(\Gamma)\to
J(Y)\times G$ with $G$ a simple abelian variety.

Let $q$ be a node of $C$ and $\nu^{-1}(q) = \{ p_1, p_2\}$. Since $C$ is very general, $p_1 - p_2$ is non-torsion
in $J(\Gamma)$. It follows that $(\phi\times g)(p_1 - p_2)$ is non-torsion in $J(Y)\times G$. On the other hand,
$\varphi(\nu(p_1)) = \varphi(\nu(p_2))$ and hence $\phi(p_1 - p_2) = 0$. Therefore, $g(p_1 - p_2)$ is non-torsion
in $G$. Then it follows from \propref{PROP801} that \eqref{E000704} holds.
\end{proof}

\begin{cor}\label{COR801}
Let $f: X\to Y$ be a finite map between two smooth irreducible projective curves. 
Suppose that $Y$ is maximum in the sense that there is no finite map $X\to T$ from $X$ to a smooth projective curve $T$
satisfying $g(Y) < g(T) < g(X)$. Let $Z$ be a nodal curve together with
\begin{equation}\label{E000703}
f = h\circ \nu: X\xrightarrow{\nu} Z \xrightarrow{h} Y
\end{equation}
where $\nu$ is the normalization of $Z$. If $Z$ has a node $q$ such that $h(q)$ is a very general point on $Y$, then
$H_{{\rm sm}}^1(Z, \BQ) = h^* H^1(Y, \BQ)$.
\end{cor}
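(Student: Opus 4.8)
The plan is to deduce this from \propref{PROP801} by verifying its hypotheses for $C=Z$, $\varphi=h$, and the normalization $\nu:X\to Z$ (so that $\varphi\circ\nu=f$). First I would fix the isogeny. By Poincar\'e reducibility choose an isogeny $J(X)\xrightarrow{\phi\times g_1\times\cdots\times g_m} J(Y)\times G_1\times\cdots\times G_m$ with $\phi=f_*=(h\circ\nu)_*$, each $G_k$ simple, and $G=G_1\times\cdots\times G_m$ isogenous to the connected component of $\ker[J(X)\to J(Y)]$ containing the identity; I take the complementary projection so that $g_k\circ f^*=0$ for every $k$. (When $g(X)=g(Y)$ one has $G=0$ and $m=0$, and \propref{PROP801} applies with no node condition, so assume $g(X)>g(Y)$.) Everything then reduces to the single point left open by \propref{PROP801}: for the node $q$ with $\nu^{-1}(q)=\{p_1,p_2\}$ and $y=h(q)$ very general, one must show that $g_k(p_1-p_2)$ is non-torsion in $G_k$ for all $k$.

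To control all nodes over a moving point at once, I would introduce the incidence curve $W=\overline{(X\times_Y X)\setminus\Delta}$, with its finite projection $\pi:W\to Y$ and its two projections $\pi_1,\pi_2:W\to X$. Since $f(p_1)=f(p_2)$ forces $\phi(p_1-p_2)=0$, the difference map $(p_1,p_2)\mapsto[p_1-p_2]$ lands in $G$, and composing with the projection to $G_k$ gives a morphism $\psi_k:W\to G_k$, namely $\psi_k=\gamma_k\circ\pi_1-\gamma_k\circ\pi_2$ where $\gamma_k=g_k\circ\alpha$. Our node pair $(p_1,p_2)$ lies on some component $W'$ of $W$, and because $y$ is very general $W'$ dominates $Y$. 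The very-general hypothesis then enters through a countability argument: if $\psi_k|_{W'}$ is \emph{non-constant}, then $\psi_k^{-1}(\mathrm{Tors}\,G_k)\cap W'$ is a countable set of points (torsion is countable, $W'$ is a curve), whose image under the finite map $\pi$ is a countable subset of $Y$; avoiding this subset (over all $k$ and all such $W'$) leaves a co-countable, i.e. very general, set of $y$ for which every $g_k(p_1-p_2)$ is non-torsion. Thus it remains to exclude the possibility that $\psi_k|_{W'}$ is a \emph{torsion constant}.

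The heart of the matter, and the step I expect to be the main obstacle, is ruling out $\psi_k|_{W'}\equiv c$ with $c$ torsion; this is exactly where maximality of $Y$ is used. Here $\gamma_k=g_k\circ\alpha$ is non-constant (as $G_k\ne 0$ and $\alpha(X)$ generates $J(X)$) and does not factor through $f$, since $g_k\circ f^*=0$ together with $f_*f^*=[\deg f]$ would otherwise force the induced map $J(Y)\to G_k$ to vanish. If $\psi_k|_{W'}\equiv c$ has order $N$, then $\gamma_k\circ\pi_1=\gamma_k\circ\pi_2+c$, so for $\gamma_k'=[N]\circ\gamma_k$ one gets $\gamma_k'\circ\pi_1=\gamma_k'\circ\pi_2$. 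Consequently the morphism $(f,\gamma_k'):X\to Y\times G_k$ collapses the two points of each generic pair of $W'$, so its Stein factorization $X\to T\to Y\times G_k$ has $\deg(X\to T)\ge 2$, while $\gamma_k'$ not factoring through $f$ forces $\deg(T\to Y)\ge 2$. I would then check that $T$ is a smooth curve with $g(Y)<g(T)<g(X)$ --- contradicting maximality --- and this is the delicate part: one must rule out the boundary phenomena where $X\to T$ or $T\to Y$ is unramified of the same genus (so that the inequalities fail to be strict), using that $\gamma_k'$ is non-constant to get $g(T)\ge 1$ and the absence of a genus-preserving factorization. I expect the genus boundary $g(X)=g(Y)+1$ to require the most care.

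Granting that $\psi_k|_{W'}$ is never a torsion constant, the node condition $g_k(p_1-p_2)\notin\mathrm{Tors}\,G_k$ holds for all $k$ and very general $y$, so \propref{PROP801} applies and yields $H^1_{\mathrm{sm}}(Z,\BQ)=h^*H^1(Y,\BQ)$.
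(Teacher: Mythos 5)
Your reduction to \propref{PROP801}, the incidence curve $W\subset X\times_Y X$, and the dichotomy between $\psi_k|_{W'}$ non-constant (handled by countability of torsion points) and constant are all sound, and the non-constant case is complete as you state it. The genuine gap sits exactly where you flag it: in the constant-torsion case you must prove the strict inequality $g(Y)<g(T)$, and neither tool you name can do this. Non-constancy of $\gamma_k'$ only gives $g(T)\ge 1$, which is vacuous once $g(Y)\ge 1$ (in the paper's application $g(Y)\ge 2$), and ``absence of a genus-preserving factorization'' is a restatement of the goal, not an argument. Your other observation, $\deg(T\to Y)\ge 2$, combined with Riemann--Hurwitz gives $g(T)\ge 2g(Y)-1$, which is strict only when $g(Y)\ge 2$ and fails at $g(Y)=1$. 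Without $g(Y)<g(T)<g(X)$ there is no contradiction with maximality, and the constant-torsion case --- the only place your scheme uses maximality --- remains open.

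The missing idea, which is in fact the central step of the paper's own proof, is a generation argument: $C=(f,\gamma_k')(X)$ is the image of $\alpha(X)$ under the homomorphism $\phi\times([N]\circ g_k):J(X)\to J(Y)\times G_k$, which is surjective; since $\alpha(X)$ generates $J(X)$, the curve $C$ generates $J(Y)\times G_k$, so the Jacobian of its normalization $T$ surjects onto $J(Y)\times G_k$ and $g(T)\ge g(Y)+\dim G_k>g(Y)$, with no hypothesis on $g(Y)$. With this inserted, your Stein factorization argument closes: $\deg(X\to T)\ge 2$ then forces $g(T)<g(X)$ by Riemann--Hurwitz, provided $g(X)\ge 2$ --- an assumption the paper also needs tacitly (its step ``$g(T)=g(X)$ forces $T\cong X$, hence $\sigma\circ\alpha$ birational'' fails for $g(X)=1$, where the statement itself breaks down: glue $p$ to $p+t$, $t$ torsion, on an elliptic curve mapping to $\PP^1$ through $X/\langle t\rangle$). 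For comparison, the paper avoids your dichotomy: it forms the countable family $\Sigma=\{\phi\times(\lambda_k\circ g_k)\}$, uses generation plus maximality to conclude that \emph{every} $\sigma\circ\alpha$ is birational onto its image $\sigma(\alpha(X))$, so that any pair of distinct points identified by some $\sigma$ must lie over the finitely many singular points of that image; the union of these bad points over $\Sigma$ is countable and is avoided by the very general choice of $h(q)$. Your repaired version, which invokes maximality only in the constant case, is a legitimate and slightly tidier variant --- but the generation step is indispensable either way, and it is what your proposal lacks.
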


\begin{proof}
We let
\begin{equation}\label{E000792}
J(X) \xrightarrow{\phi\times g_1\times g_2\times ...  \times g_m} J(Y) \times G_1\times G_2 \times ...\times G_m
\end{equation}
be an isogeny with $\phi = f_*$ and $G_1, G_2, ..., G_m$ simple abelian varieties such that
$G = G_1\times G_2 \times ... \times G_m$ is isogenous to the connected component of $\ker[J(X) \to J(Y)]$ containing the identity.
By \propref{PROP801}, it suffices to show that $g_k(p_1 - p_2)$ is non-torsion in $G_k$ for all $k=1,2,...,m$,
where $\nu^{-1}(q) = \{p_1, p_2\}$.

Now let us consider the set
\begin{equation}\label{E000793}
\Sigma = \{ \phi\times \eta_k \mid  \eta_k = \lambda_k \circ g_k, \, \lambda_k\in \mathop{\text{End}}(G_k), \,  1\le k\le m\}.
\end{equation}
Clearly, $\Sigma$ is a countable set. For $\sigma \in \Sigma$, $\sigma$ is a surjective map $J(X)\to J(Y)\times G_k$
factoring through $\phi\times g_k$. Let $C = \sigma(\alpha(X))$ be the reduced image of $\alpha(X)$ under $\sigma$,
where $\alpha$ is the Jacobian embedding $X\to J(X)$. Since $\alpha(X)$ generates $J(X)$, $C$ generates $J(Y)\times G_k$.
Hence $g(C) \ge \dim J(Y) + \dim G_k > g(Y)$. So we have a finite map $X\to T$ with $T$ the normalization of $C$.
By our hypothesis on $Y$, we must have $T\cong X$. That is, $\sigma\circ \alpha: X\to C$ is the normalization of $C$.
Then $\sigma(\alpha(p_1)) = \sigma(\alpha(p_2))$ if and only if $\sigma\circ \alpha$ maps $p_1$ and $p_2$ to the singular locus
$C_{\text{sing}}$ of $C$. So we let 
\begin{equation}\label{E000794}
\Delta = \{ p\in X \mid  \sigma(\alpha(p))\in C_{\text{sing}} \text{ for some } \sigma\in \Sigma, \,  C = \sigma(\alpha(X)) \}.
\end{equation}
Again, $\Delta$ is a countable set of points on $X$. Therefore, $p_1,p_2\not\in \Delta$ as we assume
$h(q) = f(p_1) = f(p_2)$ to be a very general point on $Y$.

Note that $g_k(p_1 - p_2)$ is torsion in $G_k$ for some $k$ if and only if $\sigma(p_1 - p_2) = 0$ for some $\sigma\in \Sigma$,
while this only happens when $p_1,p_2\in \Delta$ by the above discussion. So $g_k(p_1 - p_2)$ cannot be torsion and we are done.
\end{proof}

\begin{rmk}\label{RMK001}
Note that both \coref{COR800} and \ref{COR801} hold for very general curves or points.
Being very general means that they hold outside of a countable union of proper subvarieties. This is a notion only valid over an
uncountable field. If we work over number fields, e.g.,
$X$ and $Y$ are curves over a number field in Corollary \ref{COR801}, we may apply the generalized Bogomolov conjecture proved by S. W. Zhang
\cite{Z} to conclude that $g_k(\alpha(X))$ contains only finitely many torsion points over $\overline{\BQ}$. This implies that
the exceptional set $\Delta$ is a finite set so the Corollary holds over $\overline{\BQ}$ for $h(q)$ a general point on $Y$.
However, the application of Bogomolov conjecture seems an overkill for our purpose.
\end{rmk}

\section{A simple strange surface.}

Let $f: X \to Y$ be a finite morphism of degree $d\geq 2$ between two smooth projective curves with $g(Y) \geq 2$.
We may assume that $Y$ is a curve whose genus is maximum in the set of smooth projective curves of genus $\geq 2$
admitting a morphism $f_i: X \to Y_i$, i.e., satisfying the hypothesis of \coref{COR801}.
This can be justified by Theorem of de Franchis \cite{m} (which asserts that there are
finitely many pairs $(T, X \to T)$ of smooth projective curves $T$ of genus $\geq 2$ and a morphism $X \to T$). 

Now we choose a point $q \in Y$ such that $f^{-1}(q)$ contains at least two
distinct points $p_1\ne p_2 \in X$. We construct a curve $Z$ by gluing $p_1$ to $p_2$. More formally, it is 
the pushout:
$$
\xymatrix{
p_1 \coprod p_1 \ar[d]^f \ar[r] & X \ar[d]^g \\
q \ar[r]_j & Z
}
$$

Set $p= j(q) = g(p_1) =g(p_2) \in Z$. We observe the following:

\begin{itemize}
\item $Z$ is an algebraic curve with a node at $p$.

\item $H^1(Z,\Q)$ carries a mixed Hodge structure of weights $0$ and $1$. In particular, 
$$
\begin{aligned}
& H^1(Z,\Q) \cong H^1_c(X-f^{-1}(q),\Q), \quad   \gr^W_1 H^1(Z,\Q) \stackrel{g^*}{\cong} H^1(X,\Q).
\end{aligned}
$$
Furthermore, since $H^1_{\rm sm}(Z,\Q) $ is a sub-Hodge structure of $ \gr^W_1 H^1(Z,\Q)$, we have 
$$
\dim H^1_{\rm sm}(Z,\Q)\leq  \dim \gr^W_1 H^1(Z,\Q) = \dim H^1(X,\Q)=  2 g(X).
$$

\item There is a well-defined morphism $h: Z \to Y$ defined by $h(z) = f(g^{-1}(z))$ for any $z\in Z$, and hence the morphism $f:X \to Y$ factors through $Z$. 
\end{itemize}

Indeed, for $q\in Y$ very general, it follows immediately from \coref{COR801} that

\begin{prop}\label{PROP810}
Let $X,Y$ and $Z$ be the curves given above and let $q$ be a very general point on $Y$. Then
\begin{enumerate}
\item[(i)] $H^1_{\rm sm}(Z,\Q)  = h^* H^1(Y,\Q)$, and
\medskip

\item[(ii)] $H^1_{\rm sm}(Z \times Z ,\Q) = \bar{h}^* H^1(Y \times Y,\Q)$,
\end{enumerate}
where $h$ and $\bar{h} $ are the morphisms  in the following diagram:
$$
\xymatrix{
X \ar[rr]^f \ar@{>>}[dr]_g && Y && X \times X \ar[rr]^{\bar{f}} \ar[dr]_{\bar{g}}&& Y \times Y \\
& Z \ar[ur]_h& && &Z \times Z \ar[ur]_{\bar{h}}& 
}
$$
\end{prop}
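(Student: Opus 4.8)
The plan is to obtain (i) directly from \coref{COR801} and then to reduce (ii) to (i) by analyzing all morphisms from $Z \times Z$ into an abelian variety. For (i), I would simply check that the triple $(Z, h, g)$ meets the hypotheses of \coref{COR801}: by construction $f$ factors as $f = h \circ g$ with $g: X \to Z$ the normalization of the nodal curve $Z$, the curve $Y$ has maximal genus among smooth targets of $X$ by the de Franchis reduction built into the setup, and the image of the node satisfies $h(p) = f(p_1) = f(p_2) = q$, a very general point of $Y$. \coref{COR801} then returns $H^1_{\rm sm}(Z, \Q) = h^* H^1(Y, \Q)$ verbatim.

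For (ii), my starting point is the analogue of \eqref{E000749} for the projective variety $Z \times Z$, namely $H^1_{\rm sm}(Z \times Z, \Q) = \sum_\phi \phi^* H^1(A, \Q)$ with $\phi: Z \times Z \to A$ ranging over morphisms to abelian varieties, together with the K\"unneth splitting $H^1(Z \times Z, \Q) = \pi_1^* H^1(Z, \Q) \oplus \pi_2^* H^1(Z, \Q)$ induced by the two projections $\pi_1, \pi_2: Z \times Z \to Z$. The crucial step is to show that each such $\phi$ splits along the two factors. Since $Z$ is complete and connected, the rigidity of abelian varieties gives, after fixing a base point, a decomposition $\phi(z_1, z_2) = \psi_1(z_1) + \psi_2(z_2) + c$ with morphisms $\psi_1, \psi_2: Z \to A$ and a constant $c$. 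Because translation acts trivially on $H^1(A, \Q)$ and the group law $m: A \times A \to A$ satisfies $m^*\omega = \mathrm{pr}_1^*\omega + \mathrm{pr}_2^*\omega$ on $H^1$, I obtain $\phi^*\omega = \pi_1^* \psi_1^*\omega + \pi_2^* \psi_2^*\omega$ for every $\omega \in H^1(A, \Q)$, whence $\phi^* H^1(A, \Q) \subseteq \pi_1^* H^1_{\rm sm}(Z, \Q) + \pi_2^* H^1_{\rm sm}(Z, \Q)$.

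Summing over $\phi$ yields one inclusion, and the reverse is immediate since $\psi \circ \pi_i$ is a morphism $Z \times Z \to A$ whenever $\psi: Z \to A$ is. This gives $H^1_{\rm sm}(Z \times Z, \Q) = \pi_1^* H^1_{\rm sm}(Z, \Q) \oplus \pi_2^* H^1_{\rm sm}(Z, \Q)$, the sum being direct because the two summands sit in distinct K\"unneth factors. Substituting (i) rewrites the right-hand side as $\pi_1^* h^* H^1(Y, \Q) \oplus \pi_2^* h^* H^1(Y, \Q)$; since $\bar h = h \times h$ obeys $p_i \circ \bar h = h \circ \pi_i$ and $H^1(Y \times Y, \Q) = p_1^* H^1(Y, \Q) \oplus p_2^* H^1(Y, \Q)$, this is precisely $\bar h^* H^1(Y \times Y, \Q)$, which proves (ii).

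The hard part is the rigidity splitting of $\phi$, and the one point I would want to verify with care is that completeness of $Z$ (rather than smoothness) already suffices for the seesaw argument, so that the singularity of $Z$ causes no trouble; everything downstream is formal bookkeeping with the K\"unneth decomposition and the functoriality of pullback.
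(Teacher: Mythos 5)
Your proposal is correct, and part (i) is exactly the paper's argument: the setup of this section was arranged precisely so that the triple $(X,Z,Y)$ satisfies the hypotheses of \coref{COR801} ($Y$ of maximal genus via de Franchis, the node mapping to the very general point $q$), and indeed the paper's entire ``proof'' of Proposition \ref{PROP810} is the single sentence that it ``follows immediately'' from \coref{COR801}. Where you add genuine content is part (ii), for which the paper records no argument at all: a corollary about nodal curves cannot literally apply to the surface $Z\times Z$, so some reduction is needed, and yours is the natural one. Starting from the paper's own identity \eqref{E000749} applied to $Z\times Z$, you reduce to morphisms $\phi:Z\times Z\to A$ into abelian varieties, split $\phi(z_1,z_2)=\psi_1(z_1)+\psi_2(z_2)+c$ by the rigidity lemma, and use that translations act trivially on cohomology together with $m^*\omega=\mathrm{pr}_1^*\omega+\mathrm{pr}_2^*\omega$ on $H^1$ of an abelian variety to get $\phi^*H^1(A,\Q)\subseteq \pi_1^*H^1_{\rm sm}(Z,\Q)+\pi_2^*H^1_{\rm sm}(Z,\Q)$; then (i) and the K\"unneth decompositions of $H^1(Z\times Z,\Q)$ and $H^1(Y\times Y,\Q)$ finish the bookkeeping. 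Your one point of worry is also resolved correctly: the rigidity lemma requires only that the factor be a complete (integral) variety, not a smooth one, so the node of $Z$ causes no difficulty, and each $\psi_i$ is an honest morphism from $Z$ to the smooth variety $A$, so its pullback does land in $H^1_{\rm sm}(Z,\Q)$. In short, your write-up supplies, soundly, the reduction that the authors leave implicit, which is arguably what a complete proof of (ii) should contain.
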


More importantly, we claim that
\begin{equation}\label{E000816}
H^2_{\rm sm}(Z \times Z ,\Q) \subsetneq \gr^W_2 H^2(Z\times Z, \Q) \stackrel{\bar{g}^*}{\cong} H^2(X\times X, \Q).
\end{equation}
That is, $H^2(Z \times Z,\Q)$ carries a weight $2$ Hodge cycle not in $H^2_{\rm sm}(Z\times Z,\Q)$. Therefore $Z\times Z$ is strange.

Indeed, we can prove the following

\begin{prop}\label{PROP820}
Let $X$ be a smooth projective curve and let $g: X\to Z$ be the map gluing two distinct points $p_1$ and $p_2$ of $X$ as above.
If $p_1 - p_2$ is non-torsion in $\Pic(X)$, then $\bar{g}^* \Pic_\Q(Z\times Z) \ne \Pic_\Q(X\times X)$. In particular,
\begin{equation}\label{E000817}
\Delta_X + \pi_1^* D_1 + \pi_2^*D_2 \not\in \bar{g}^* \Pic_\Q(Z\times Z) 
\end{equation}
for all $D_1, D_2\in \Pic(X)$, where $\Delta_X$ is
the diagonal of $X\times X$ and $\pi_1$ and $\pi_2$ are the projections of $X\times X$ to $X$. Consequently,
\begin{equation}\label{E000818}
\Delta_X \not \in \bar{g}^* H_{\rm sm}^{1,1}(Z\times Z, \Q) \subsetneq H^{1,1}(X\times X, \BQ)
\end{equation}
and hence
\begin{equation}\label{E000819}
\bar{g}^* H_{\rm sm}^2(Z\times Z, \BQ) \subsetneq H^2(X\times X, \Q),
\end{equation}
where $H_{\rm sm}^{k,k}(W, \BQ) = H_{\rm sm}^{2k}(W, \BQ)\cap F^k H_{\rm sm}^{2k}(W, \BC)$ and ${\rm Pic}_{\Q}(W) = {\rm Pic}(W) \otimes \Q$.
\end{prop}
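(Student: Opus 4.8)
The plan is to detect the failure of descent by restricting line bundles to the two ``vertical'' slices of $X\times X$ lying over the glued slice of $Z\times Z$. For $i=1,2$ let $\iota_i: X\hookrightarrow X\times X$ be the closed immersion $x\mapsto (p_i,x)$ onto $\{p_i\}\times X$. The crucial observation is that $\bar{g}\circ\iota_1=\bar{g}\circ\iota_2$: both send $x$ to $(p,g(x))$, since $g(p_1)=g(p_2)=p$. Hence the two restriction homomorphisms $\iota_1^*,\iota_2^*:\Pic_\Q(X\times X)\to\Pic_\Q(X)$ agree on the image of $\bar{g}^*$, i.e. $(\iota_1^*-\iota_2^*)\circ\bar{g}^*=0$. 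This vanishing is the obstruction I would exploit.

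Next I would compute $\iota_1^*-\iota_2^*$ on the generator $\Delta_X+\pi_1^*D_1+\pi_2^*D_2$. Since $\pi_1\circ\iota_i$ is the constant map to $p_i$, the term $\pi_1^*D_1$ restricts trivially to both slices; since $\pi_2\circ\iota_i=\mathrm{id}_X$, the term $\pi_2^*D_2$ restricts to $D_2$ on both slices; and the diagonal meets $\{p_i\}\times X$ transversally in the single point $(p_i,p_i)$, so $\iota_i^*\mO(\Delta_X)=\mO_X(p_i)$. Therefore
$$
(\iota_1^*-\iota_2^*)\bigl(\Delta_X+\pi_1^*D_1+\pi_2^*D_2\bigr)=p_1-p_2,
$$
which is non-torsion, hence nonzero in $\Pic_\Q(X)$, by hypothesis. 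As this lies outside $\ker(\iota_1^*-\iota_2^*)\supseteq\bar{g}^*\Pic_\Q(Z\times Z)$, the class $\Delta_X+\pi_1^*D_1+\pi_2^*D_2$ cannot lie in $\bar{g}^*\Pic_\Q(Z\times Z)$. The same computation goes through verbatim for $D_1,D_2\in\Pic_\Q(X)$, which proves \eqref{E000817} together with the rational strengthening I need below.

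To pass to cohomology I would first identify the smooth center in degree two. By \coref{cor:cp-in-smooth-coh} together with the Lefschetz $(1,1)$ theorem (the Hodge conjecture in degree $2$ for smooth varieties), $H_{\rm sm}^{1,1}(Z\times Z,\Q)=\im[c_1:K^0(Z\times Z)\otimes\Q\to H^2]=c_1\bigl(\Pic_\Q(Z\times Z)\bigr)$, so by functoriality $\bar{g}^*H_{\rm sm}^{1,1}(Z\times Z,\Q)=c_1\bigl(\bar{g}^*\Pic_\Q(Z\times Z)\bigr)$. Now suppose for contradiction that $[\Delta_X]\in\bar{g}^*H_{\rm sm}^{1,1}(Z\times Z,\Q)$, say $[\Delta_X]=c_1(\bar{g}^*\mathcal{L})$. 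Then $\mO(\Delta_X)-\bar{g}^*\mathcal{L}$ lies in $\ker(c_1)=\Pic^0_\Q(X\times X)=\pi_1^*\Pic^0_\Q(X)\oplus\pi_2^*\Pic^0_\Q(X)$, so it equals $\pi_1^*E_1+\pi_2^*E_2$ for some $E_i\in\Pic^0_\Q(X)$. Rearranging gives $\Delta_X-\pi_1^*E_1-\pi_2^*E_2=\bar{g}^*\mathcal{L}\in\bar{g}^*\Pic_\Q(Z\times Z)$, contradicting the rational form of \eqref{E000817}. This proves $\Delta_X\notin\bar{g}^*H_{\rm sm}^{1,1}(Z\times Z,\Q)$; since $[\Delta_X]\in H^{1,1}(X\times X,\Q)$ and, as recorded before the proposition, $\bar{g}^*$ identifies $H^2_{\rm sm}(Z\times Z,\Q)$ with a sub-Hodge structure of $\gr^W_2H^2(Z\times Z,\Q)\cong H^2(X\times X,\Q)$, the inclusions in \eqref{E000818} are strict. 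Finally, because $\bar{g}^*$ is a morphism of Hodge structures, any preimage of the type-$(1,1)$ class $\Delta_X$ in $H^2_{\rm sm}(Z\times Z,\Q)$ would itself be of type $(1,1)$, i.e. would lie in $H_{\rm sm}^{1,1}(Z\times Z,\Q)$; thus $\Delta_X\notin\bar{g}^*H^2_{\rm sm}(Z\times Z,\Q)$, giving \eqref{E000819}.

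The routine inputs---the splitting $\Pic^0(X\times X)=\Pic^0(X)\oplus\Pic^0(X)$, the transversality computation for $\Delta_X$, and strictness of morphisms of Hodge structures---are all standard. The main conceptual point, and the step I would be most careful about, is the very first one: recognizing that gluing $p_1$ to $p_2$ forces the two vertical slices to be identified, so that the single invariant $(\iota_1^*-\iota_2^*)(-)=p_1-p_2$ simultaneously vanishes on everything pulled back from $Z\times Z$ and is nonzero on the diagonal precisely because $p_1-p_2$ is non-torsion. Everything else is bookkeeping around this obstruction.
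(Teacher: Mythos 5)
Your proof is correct, and on the first two assertions it is essentially the paper's own argument. For \eqref{E000817} the paper factors $\bar{g}$ through $\varphi\colon X\times X\to Z\times X$ and compares the restrictions of $\varphi^*L$ to $p_1\times X$ and $p_2\times X$ under the identification $(p_1,x)\mapsto(p_2,x)$; that is exactly your observation that $\bar{g}\circ\iota_1=\bar{g}\circ\iota_2$, so that $(\iota_1^*-\iota_2^*)\circ\bar{g}^*=0$ while $(\iota_1^*-\iota_2^*)$ sends $\Delta_X+\pi_1^*D_1+\pi_2^*D_2$ to the non-torsion class $p_1-p_2$. For \eqref{E000818} the paper combines \eqref{E000817} with \eqref{E000820} and \eqref{E000821}; your derivation via $\ker(c_1)=\Pic^0_\Q(X\times X)$ and \coref{cor:cp-in-smooth-coh} (Lefschetz $(1,1)$) is the same bookkeeping, just with the $\Pic^0$ discrepancy moved to the left-hand side (which is why you need, and correctly note, the $\Pic_\Q$ strengthening of \eqref{E000817}) instead of absorbed into $\bar{g}^*\Pic(Z\times Z)$ via \eqref{E000821}.

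Where you genuinely diverge is the step from \eqref{E000818} to \eqref{E000819}. You argue element-wise and Hodge-theoretically: $H^2_{\rm sm}(Z\times Z,\Q)$ is pure, $\bar{g}^*$ embeds it into $H^2(X\times X,\Q)$ through $\gr^W_2 H^2(Z\times Z,\Q)\cong H^2(X\times X,\Q)$, and type-preservation then forces any preimage of the rational $(1,1)$-class $[\Delta_X]$ to lie in $H^{1,1}_{\rm sm}(Z\times Z,\Q)$, contradicting \eqref{E000818}. The paper instead supposes $\rho^*H^2(S,\Q)=H^2(X\times X,\Q)$ for a single smooth target $S$ (implicitly reducing the sum defining $H^2_{\rm sm}$ to one morphism by taking products), picks $\xi\ne 0$ in $H^{1,1}(X\times X,\Q)$ orthogonal to $\bar{g}^*H^{1,1}_{\rm sm}(Z\times Z,\Q)$, and uses the projection formula, Hard Lefschetz duality between $H^{1,1}(S,\Q)$ and $H^{n-1,n-1}(S,\Q)$, and Poincar\'e duality on $X\times X$ to force $\xi=0$. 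Both routes are sound. Yours is shorter and avoids Hard Lefschetz entirely, but it leans on the injectivity of $\bar{g}^*$ on $\gr^W_2 H^2(Z\times Z,\Q)$, i.e.\ the K\"unneth computation behind the isomorphism asserted in \eqref{E000816}; be aware that it is this injectivity, and not merely the fact that $\bar{g}^*$ is a morphism of Hodge structures, that licenses ``preimages of a $(1,1)$-class are $(1,1)$'' --- your preceding sentence does supply it, but your stated justification slightly understates what is used. The paper's route needs no injectivity of $\bar{g}^*$ at all, only dualities intrinsic to $X\times X$ and to $S$, which makes it more robust and better adapted to strengthenings such as the one conjectured in \remref{RMK002}.
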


\begin{proof}
Note that $\bar{g}: X\times X\to Z\times Z$ factors through $Z\times X$ with the map $\varphi: X\times X\to Z\times X$. 
For a line bundle $L$ on $Z\times X$, we have
\[
\varphi^* L\Big|_{p_1\times X} = \varphi^*(L\Big|_{q\times X}) \text{ and }
\varphi^* L\Big|_{p_2\times X} = \varphi^*(L\Big|_{q\times X})
\]
where $q = g(p_1) = g(p_2)$. Thus,
\[
\varphi^* L\Big|_{p_1\times X} \cong j^* \varphi^* L\Big|_{p_2\times X}
\]
through the identification $j: p_1\times X\to p_2\times X$ sending $(p_1, x)\to (p_2,x)$. Therefore, if
$\Delta_X + \pi_1^* D_1 + \pi_2^* D_2$ lies in
$\varphi^* \Pic_\Q(Z\times X)$, then
\[
\mO_{p_1\times X}(N\Delta_X) \cong j^*\mO_{p_2\times X}(N\Delta_X)
\]
for some $N\in \BZ^+$. That is, $N(p_1 - p_2) = 0$ in $\Pic(X)$. This proves \eqref{E000817}.

Observe that
\begin{equation}\label{E000820}
\Pic(X\times X) = \pi_1^* \Pic^0(X) \oplus \pi_2^* \Pic^0(X) \oplus H^{1,1}(X\times X, \BZ)
\end{equation}
and
\begin{equation}\label{E000821}
\pi_1^* \Pic(X) \oplus \pi_2^* \Pic(X) \subset \bar{g}^* \Pic(Z\times Z).
\end{equation}
Combining \eqref{E000817}, \eqref{E000820} and \eqref{E000821}, we conclude \eqref{E000818}.

Finally, we assume that \eqref{E000819} fails to hold. That is, 
there exists a morphism $\eta: Z \times Z \to S$ from $Z\times Z$ to a smooth projective variety $S$ such that
\begin{equation}\label{E000822}
\rho^* H^2(S, \Q) = H^2(X\times X, \Q),
\end{equation}
where $\rho = \eta\circ \bar{g}$. By \eqref{E000818}, 
 there exists $\xi\ne 0 \in H^{1,1}(X\times X, \BQ)$ such that
$\xi$ is perpendicular to $\bar{g}^*H^{1,1}_{\rm sm}(Z \times Z,\Q)$.  
Since $\rho$ factors through $\bar{g}$, we have
$\xi \cdot \rho^* \omega = 0$ and hence $\rho_* \xi \cdot \omega = 0$ for all $\omega\in H^{1,1}(S, \BQ)$. Note that
$\rho_* \xi \in H^{n-1,n-1}(S, \BQ)$ for $n = \dim S$. By the Hard Lefschetz theorem, $H^{1,1}(S,\BQ)$ and $H^{n-1,n-1}(S, \BQ)$ are dual
to each other. Therefore, $\rho_* \xi = 0$ and it follows that $\xi \cdot \rho^* \omega = 0$ for all $\omega \in H^2(S,\Q)$.
By \eqref{E000822}, this implies $\xi = 0$. Contradiction.
\end{proof}

\begin{rmk}\label{RMK002}
Note that \propref{PROP820} holds for any pair $(X, Z)$, where $X$ is the normalization of a curve $Z$ with one node, as long as
$p_1 - p_2$ is non-torsion for the two points $p_1$ and $p_2$ over the node. In our setting, we are expected to say more. Indeed, we believe
that $H^2_{\rm sm}(Z \times Z ,\Q) = \bar{h}^* H^2(Y \times Y,\Q)$ and hence
\begin{equation}\label{E000823}
H^k_{\rm sm}(Z \times Z ,\Q) = \bar{h}^* H^k(Y \times Y,\Q)
\end{equation}
for all $k$. But we do not know how to prove \eqref{E000823} yet.
\end{rmk}

\section{Revisiting the example of Barbieri-Viale and Srinivas} 

 The previous example had singularities in codimension one. An example
 of a strange normal surface was constructed in \cite{bs}. We recall
 the relevant details.
Let $X$ be a hypersurface in $\PP^3$ defined by an equation
$F(x,y,z,w) = w(x^3 - y^2 z) + f(x,y,z) $ where $f(x,y,z)$ is a
general homogeneous polynomial of degree 4 in $x,y$ and $z$. $X$ has an
isolated singularity at $p=[0:0:0:1]$.  So it is normal. In \cite{bs}, the authors showed the following:

\medskip\noindent
(a) Let $f: Y \to X$ be the blow-up of $X$ at $p$ and $E$ be the exceptional divisor. Then $\pi: Y \to \PP^2 $ is also a blow-up of $\PP^2$ at 12 points $\{p_1, \cdots , p_{12}\} = V(x^3 -y^2 z) \cap V(f(x,y,z))$, and hence $Y$ is a smooth rational surface.

\medskip \noindent
(b) There is an exact sequence
$$
0 \to H^2(X,\Q) \to H^2 (Y, \Q) \stackrel{\alpha}{\to} \Q \to 0 
$$
where $\alpha$ is the intersection number with the cohomology class of
$E$.  It follows that $H^2(X,\Q)$ is  $12$ dimensional because
$H^2(Y,\Q)$ is generated by the pullback of the class $h$ of a line in
$\PP^2$ and the $12$ exceptional divisor classes $\{ e_1, \cdots,
e_{12}\}$. Note also that $H^2(Y,\Q)$ and therefore $H^2(X,\Q)$ consists entirely of Hodge classes.

\medskip \noindent
 (c) The classes $e_i-e_j \notin \im [c_1: {\rm Pic}(X) \otimes \Q \to H^2(X,\Q)]$.
 \medskip
 
It follows from Corollary \ref{cor:c1toH2sm} that $e_i-e_j\notin H^2_{\rm sm}(X,\Q)$. So $X$ is definitely strange.
More precisely, we have $\Pic(X)\cong \BZ$ by the following lemma and hence $H^2_{\rm sm}(X, \BQ) \cong \BQ$.

\begin{lemma}\label{LEM002}
For $f(x,y,z)$ general, $\Pic(X)$ is freely generated by $\mO_X(1)$ over $\BZ$.
\end{lemma}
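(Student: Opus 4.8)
The plan is to pass to the smooth rational surface $Y$ and to control $\Pic(X)$ through the contraction $f\colon Y\to X$ of the exceptional curve $E$. Since $X$ is normal with a single isolated singularity at $p$, removing $p$ changes neither divisor classes, so $\mathrm{Cl}(X)\cong \Pic(Y\setminus E)\cong \Pic(Y)/\BZ[E]$ with $E$ irreducible, and $\Pic(X)\hookrightarrow \mathrm{Cl}(X)$ is exactly the subgroup of Cartier classes. From (a)--(b), $E$ is the strict transform of the cuspidal cubic $C=V(x^3-y^2z)\subset\PP^2$, so $[E]=3h-\sum_{i=1}^{12}e_i$, $E^2=-3$ and $p_a(E)=1$; thus $E$ is a cuspidal rational curve and $\Pic^0(E)\cong\mathbb{G}_a$. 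I would first record that, using $\sum e_i\equiv 3h\bmod[E]$, one has $f^*\mO_X(1)=4h-\sum e_i$ (checked via $f^*\mO_X(1)\cdot E=0$ and $(f^*\mO_X(1))^2=4$ together with the symmetry of the $12$ points), so that $\mO_X(1)$ corresponds to $\bar h$ in $\mathrm{Cl}(X)$.

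The heart of the argument is a Cartier criterion, of which only the necessary direction is needed. If $\bar M=\overline{ah-\sum b_i e_i}$ is Cartier at $p$, then the pullback of the corresponding line bundle is trivial on the fibre $E$, so after twisting by a suitable multiple of $E$ to reach degree $0$ on $E$, the restriction must be trivial in $\Pic^0(E)=\mathbb{G}_a$. Parametrising the smooth locus $C_{\mathrm{sm}}$ by its additive coordinate (so that three collinear points sum to $0$), let $t_i\in\mathbb{G}_a=\BC$ be the coordinate of the point of $E$ over $p_i$. Because a line meets $C$ in three points summing to $0$, the contributions of $h$ and of the twist by $E$ drop out, and the triviality condition reduces to the single relation $\sum_i b_i t_i=0$. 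Hence $\Pic(X)\otimes\BQ$ is contained in the image in $\mathrm{Cl}(X)\otimes\BQ\cong\BQ^{12}$ of $\{\,\sum b_i t_i=0\,\}$. Since the $12$ points form the divisor of $f|_C\in|\mO_C(4)|$, one always has $\sum_i t_i=0$, so $\dim_{\BQ}\langle t_1,\dots,t_{12}\rangle\le 11$, with equality precisely when $\sum t_i=0$ is the only $\BQ$-relation; and equality forces $\Pic(X)\otimes\BQ=\BQ\,\mO_X(1)$. Freeness and the fact that $\mO_X(1)$ is a generator then follow since $\bar h$ is primitive in $\mathrm{Cl}(X)\cong\BZ^{12}$ (exhibit a homomorphism $\mathrm{Cl}(X)\to\BZ$ sending $\bar h$ to $1$).

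The main obstacle is the genericity step: showing that for very general $f$ the numbers $t_1,\dots,t_{12}$ admit no $\BQ$-linear relation other than $\sum t_i=0$. I would handle this by a dominance argument rather than explicit monodromy. Restriction of quartics to $C$ and pullback to $\PP^1$ identifies the system cut by $|\mO_{\PP^2}(4)|$ with the degree-$12$ polynomials in the additive coordinate having vanishing $u^{11}$-coefficient; equivalently $f\mapsto(t_1,\dots,t_{12})$ surjects onto the $11$-dimensional space of unordered $12$-point, zero-sum configurations on $\mathbb{G}_a$. For each rational $\lambda\notin\BQ(1,\dots,1)$ and each ordering, the configurations with $\sum\lambda_i t_i=0$ form a proper closed subset, and their countable union pulls back, under this dominant map, to a countable union of proper subvarieties of the space of quartics, which a very general $f$ avoids. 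This yields $\dim_{\BQ}\langle t_i\rangle=11$ and completes the proof. (As in Remark~\ref{RMK001}, ``general'' here should be read as ``very general'', the relevant notion since all of $H^2(X)$ is of Hodge type and the obstruction is purely one of $\BQ$-linear relations among the transcendental quantities $t_i$.)
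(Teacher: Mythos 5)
Your proposal is correct, and its engine is the same as the paper's: both reduce the computation of $\Pic(X)$ to deciding which classes on the blow-up $Y$ restrict trivially to the exceptional curve $E\cong C$, use $\Pic(C)\cong\BZ\oplus\mathbb{G}_a$, and conclude from $\BQ$-linear independence of the additive coordinates $t_i$ of the twelve points that only multiples of $\bar h=\mO_X(1)$ survive. The differences are in the packaging and, more substantively, in the genericity step. For the first: the paper embeds $\Pic(X)$ into $\Pic(Y)$ via the Leray exact sequence $0\to\Pic(X)\to\Pic(Y)\to H^0(X,R^1f_*\mO_Y^*)$ and composes with restriction to $E$, whereas you work in $\mathrm{Cl}(X)\cong\Pic(Y)/\BZ[E]$ and use the (necessary direction of the) Cartier criterion that a line bundle pulled back from $X$ is trivial on the contracted curve; these are equivalent in content, and your observation that only the necessary direction is needed (because $\mO_X(1)$ already supplies the generator, and primitivity of $\bar h$ in $\mathrm{Cl}(X)\cong\BZ^{12}$ upgrades $\Pic(X)\otimes\BQ=\BQ\,\mO_X(1)$ to $\Pic(X)=\BZ\,\mO_X(1)$) is a clean shortcut. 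The genuine divergence is that the paper runs the construction backwards: it first chooses twelve points on the cuspidal cubic whose classes are $\BQ$-linearly independent in $\Pic(C)\otimes\BQ$ subject to $4h=\sum p_i$, and then produces a quartic through them using the surjection $H^0(\mO_{\PP^2}(4))\to H^0(\mO_C(4))$; so it exhibits one good $X$ rather than literally treating a general $f$. You go forwards from a very general $f$: identifying the restricted linear system with the degree-$12$ polynomials in $u$ with vanishing $u^{11}$-coefficient, you show $f\mapsto(t_1,\dots,t_{12})$ dominates the zero-sum configurations, so a very general $f$ avoids the countably many proper subvarieties where an extra $\BQ$-relation $\sum\lambda_i t_i=0$ holds. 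This buys a proof of the lemma as literally stated (for very general $f$), at the price of the dominance computation; the paper's version is shorter and suffices for its application, since only the existence of one such $X$ is needed, and in both treatments ``general'' must be read as ``very general,'' as you correctly flag.
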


\begin{proof}
The blowup $\pi: Y\to \PP^2$ is actually the composition of $g: X\dashrightarrow \PP^2$ and
$f: Y\to X$, where $g$ is the projection sending $[x:y:z:w]$ to $[x:y:z]$. Clearly, $g$ is regular outside
of $p$ and blowing up $X$ at $p$ resolves the indeterminacy of $g$; the resulting regular map $Y\to \PP^2$
is exactly $\pi$. Alternatively, we can construct $Y$ and $X$ from $\PP^2$ as follows.

Let $C$ be the cuspidal cubic curve given by $x^3 - y^2 z = 0$ on $\PP^2$. It is well known that
$\Pic(C) = \BZ \oplus \mathbb{G}_a$, where $\mathbb{G}_a$ is the additive group of $\BC$. Obviously, we have an injection
$\Pic(\PP^2)\hookrightarrow \Pic(C)$. We choose $12$ points $p_1, p_2, ..., p_{12}$ on 
$C\backslash \{[0:0:1]\}$ such that
\begin{itemize}
\item $4h = p_1 + p_2 + ... + p_{12}$ in $\Pic(C)$ and
\item $p_1, p_2, ..., p_{12}$ are linearly independent over $\BQ$ in $\Pic(C)\otimes \BQ$.
\end{itemize}
Here we use $h$ for both the hyperplane class in $\PP^2$ and, for convenience, its pullback to $C$.

By the surjection $H^0(\mO_{\PP^2}(4))\to H^0(\mO_C(4))$, we see that there exists a quartic curve
$D = V(f(x,y,z))$ passing through $p_1, p_2, ..., p_{12}$. Let $\pi: Y\to \PP^2$ be the blowup of $\PP^2$
at $p_1, p_2, ..., p_{12}$ and let $E$ and $F\subset Y$ be the proper transforms of $C$ and $D$, respectively.
Note that
\[
E = 3\pi^* h - e_1 - e_2 - ... - e_{12} \text{ and }
F = 4\pi^* h - e_1 - e_2 - ... - e_{12}.
\]
By the exact sequence
\[
\xymatrix{
0 \ar[r] & H^0(\mO_Y(F - E)) \ar[r] \ar@{=}[d]& H^0(\mO_Y(F)) \ar[r] & H^0(\mO_E(F)) \ar[r]\ar@{=}[d] & 0\\
         & H^0(\mO_Y(\pi^* h))      & & H^0(\mO_E)}
\]
we see that $|F|$ is a base point free linear series of dimension $3$. Let $f: Y\to \PP^3$ be the map given by $|F|$.
Since $E\cdot F = 0$, $f_* E = 0$, i.e., this map contracts the curve $E$ to a point.
It is exactly the map that maps $Y$ onto $X$ at the very beginning
of this section.

The Leray spectral sequence for the sheaf $\mO^*_Y$ gives an exact sequence 
$$
0 \to \Pic (X) \to \Pic (Y) \to H^0(X, R^1 f_*\mO^*_Y).
$$
By composing the last map with a restriction $H^0(X, R^1 f_*\mO^*_Y) \to \Pic(E)$, we can see that $\Pic(X)$ lies in the kernel of the map $\Pic(Y) \to \Pic(E)$.
On the other hand, for every
divisor $M = d\pi^* h + m_1 e_1 + m_2 e_2 + ... + m_{12} e_{12}$ in $\Pic(Y)$,
$$
(d \pi^* h + m_1 e_1 + m_2 e_2 + ... + m_{12} e_{12})\bigg|_E = dh + m_1 p_1 + m_2 p_2 + ... + m_{12} p_{12}
$$
in $\Pic(E)\cong \Pic(C)$. Since we choose $p_1,p_2, ...,p_{12}$ to be linearly independent over $\BQ$,
$dh + m_1 p_1 + m_2 p_2 + ... + m_{12} p_{12} = 0$ in $\Pic(C)$ if and only if
$$
m_1 = m_2 = ... = m_{12} = -\frac{d}4.
$$
That is, $M$ lies in the kernel of $\Pic(Y)\to \Pic(E)$ if and only if $M$ is a multiple of $F$. It
follows that $\Pic(X)$ is generated by $F = \mO_X(1)$.
\end{proof}

\section{Examples of varieties with normal crossings}
In this section we consider two examples of varieties with normal crossings, one constructed by Bloch \cite{Jannsen} and
the other constructed by Srinivas \cite{bv}. Each of these examples will be reviewed after we establish a couple of lemmas that we will need later.

\medskip

Let $Y$ be a smooth projective variety and $Z$ be a smooth subvariety of $Y$. Let $X = Y \coprod_Z Y$ be the variety obtaining by glueing two copies of $Y$ along $Z$. i.e., $X$ is the variety defined as the pushout
$$
\xymatrix{
Z \coprod Z \ar[r]^{i \coprod i \quad } \ar[d] & Y \coprod Y \stackrel{\rm set}{=} \tilde{Y} \ar[d]^{\pi} \\
Z \ar[r]^j & X
}
$$
where $i: Z \hookrightarrow Y$ and $j: Z \hookrightarrow X$ are inclusions. Furthermore, $\pi: \tilde{Y}= Y \coprod Y \to X$ is the desingularization of $X$. This is the disjoint union of the
inclusions $i_j : Y \hookrightarrow X$ (for $j=1,2$)  of the two components.
Since these are regular embeddings, there are pullbacks $i_j^*$, and therefore $\pi^*$, on the level of Chow groups \cite[chap 6]{fulton1}. 

\begin{lemma}\label{lemma:cl2-vs-c2}
There exists a commutative diagram whose top row is a complex and bottom row is exact: 
\begin{equation}\label{eq:diag-1-NC}
\xymatrix@=18pt{
&\Ch^p(X;\Q) \ar[r]^{\pi^*} \ar[d]^{\cl^X_p}& \Ch^p(\tilde{Y} ;\Q) \ar[d]^{\cl^{\tilde{Y}}_p} \ar[r]^{\iota^*} & \Ch^p(Z;\Q) \ar[d]^{\cl^Z_p} \\
0 \ar[r] & \gr^W_{2p}H^{2p}(X,\Q) \ar[r]^{\pi^* \qquad} & H^{2p}(Y,\Q) \oplus H^{2p}(Y,\Q) \ar[r]^{\quad \qquad \iota^*} & H^{2p}(Z,\Q)  
}
\end{equation}
where the last two maps labelled $\cl^*_p$ are the $p$-th cycle class maps, and $\iota^*$ is the difference of the restrictions.
Furthermore, 
$$
\im [\cl^X_p : \Ch^p(X;\Q) \to H^{2p}(X,\Q)] \supseteq \im [c_p : K^0(X) \otimes \Q \to H^{2p}(X,\Q)].
$$
\end{lemma}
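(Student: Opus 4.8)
The plan is to prove the lemma in two stages: first construct the commutative diagram and verify that its bottom row is exact and its top row is a complex, then deduce the image inclusion, which is the substantive point.

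For the bottom row I would take the weight-$2p$ graded piece of the Mayer--Vietoris sequence attached to the pushout $X = Y\coprod_Z Y$. Since $\pi:\tilde{Y}=Y\coprod Y\to X$ is the normalization with double locus $Z$, the descent exact sequence of mixed Hodge structures reads $\cdots\to H^{2p}(X,\Q)\xrightarrow{\pi^*} H^{2p}(Y,\Q)\oplus H^{2p}(Y,\Q)\xrightarrow{\iota^*} H^{2p}(Z,\Q)\to\cdots$, with $\iota^*$ the difference $i_1^*-i_2^*$ of the two restrictions. Because $Y$ and $Z$ are smooth projective, $H^{2p}(Y,\Q)^{\oplus 2}$ and $H^{2p}(Z,\Q)$ are pure of weight $2p$, so $\pi^*$ factors through $\gr^W_{2p}H^{2p}(X,\Q)$ and is injective there, while exactness in the middle survives passing to $\gr^W_{2p}$. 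Concretely I would read off both the injectivity of $\pi^*$ on $\gr^W_{2p}$ and the middle exactness from the weight spectral sequence (equivalently, the semisimplicial resolution) of the normal-crossing variety $X$.

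For the top row, $\pi^*=(i_1^*,i_2^*)$ and $\iota^*$ is the difference of the Gysin restrictions $\Ch^p(Y;\Q)\to\Ch^p(Z;\Q)$, which exist because $Z\hookrightarrow Y$ is a regular embedding. That it is a complex is formal: the two component inclusions $i_1,i_2$ restrict to the \emph{same} morphism $Z\to X$, so for any $\alpha\in\Ch^p(X;\Q)$ the classes $i_1^*\alpha$ and $i_2^*\alpha$ have equal restriction to $Z$, whence $\iota^*\pi^*\alpha=0$. The two squares commute by the functoriality of the cycle class maps under pullback.

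For the image inclusion, I would argue directly rather than by a diagram chase. By the construction recalled in the proof of \lemref{lemma:K0sm} (cf. \cite{ArapuraKang}), any $\xi\in K^0(X)\otimes\Q$ is a pullback $\iota^*\xi_M$ of a class $\xi_M\in K^0(M)\otimes\Q$ under a closed embedding $\iota:X\hookrightarrow M$ into a smooth projective variety $M$. On the smooth $M$ the Chern class $c_p(\xi_M)\in H^{2p}(M,\Q)$ is the cycle class $\cl^M_p(\Gamma)$ of an algebraic cycle $\Gamma\in\Ch^p(M;\Q)$. Since the target $M$ is smooth, the pullback $\iota^*:\Ch^p(M;\Q)\to\Ch^p(X;\Q)$ is defined (cf. \cite[Ch.~8]{fulton1}) and commutes with the cycle class maps, so $\cl^X_p(\iota^*\Gamma)=\iota^*\cl^M_p(\Gamma)$. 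Combining this with the naturality of Chern classes gives $c_p(\xi)=c_p(\iota^*\xi_M)=\iota^*c_p(\xi_M)=\iota^*\cl^M_p(\Gamma)=\cl^X_p(\iota^*\Gamma)$, which lies in $\im\,\cl^X_p$, proving the inclusion. The main obstacle I anticipate is not this final chase but the foundational bookkeeping it rests on: pinning down the cycle class map $\cl^X_p:\Ch^p(X;\Q)\to\gr^W_{2p}H^{2p}(X,\Q)$ for the singular $X$ and verifying its compatibility with the Gysin pullback $\iota^*$ along the smooth target $M$, together with the exactness of the bottom row, which genuinely requires the mixed Hodge/weight machinery for normal-crossing varieties.
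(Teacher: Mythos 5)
Your treatment of the bottom row (Mayer--Vietoris, purity of $H^{2p}(Y,\Q)^{\oplus 2}$ and $H^{2p}(Z,\Q)$, then applying the exact functor $\gr^W_{2p}$) and your argument that the top row is a complex both match the paper. The genuine gap is in how you obtain the vertical map $\cl^X_p$ and the commutativity of the left square: you invoke ``functoriality of the cycle class maps under pullback,'' and you describe ``pinning down the cycle class map $\cl^X_p:\Ch^p(X;\Q)\to\gr^W_{2p}H^{2p}(X,\Q)$ for the singular $X$'' as foundational bookkeeping to be looked up. But for a singular projective variety there is no off-the-shelf cycle class map into \emph{cohomology}: the standard cycle class for singular varieties lands in Borel--Moore homology, and without Poincar\'e duality it does not transfer to $H^{2p}(X,\Q)$. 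Constructing $\cl^X_p$ is precisely the content of the lemma, and the paper does it with the diagram itself: for $\xi\in\Ch^p(X;\Q)$, the class $\cl^{\tilde{Y}}_p(\pi^*\xi)$ is killed by $\iota^*$ (commutativity of the right square, which only involves the smooth varieties $\tilde{Y}$ and $Z$, together with the top row being a complex), hence by exactness and injectivity in the bottom row it equals $\pi^*(\alpha)$ for a unique $\alpha\in\gr^W_{2p}H^{2p}(X,\Q)$; one then \emph{defines} $\cl^X_p(\xi)=\alpha$, and commutativity of the left square is automatic. Your proposal contains every ingredient for this definition (injectivity of $\pi^*$, middle exactness, the complex property, the right square), but never takes the step of defining $\cl^X_p$ this way, so as written the diagram is not actually constructed.

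For the ``furthermore'' inclusion your route is genuinely different from the paper's. The paper stays on $X$: it uses Fulton's operational Chern classes, sets $\Xi=c_p^{\rm fulton}(\scE)([X])\in\Ch^p(X;\Q)$, and cites the compatibilities of \cite[chap.~19]{fulton1} to get $\cl^X_p(\Xi)=c_p(\scE)$. You instead embed $X$ into a smooth $M$ with $\xi=\iota^*\xi_M$, represent $c_p(\xi_M)$ by a cycle $\Gamma$ on $M$, and pull $\Gamma$ back by the Gysin map of \cite[chap.~8]{fulton1}. This can be made to work, but your key equality $\cl^X_p(\iota^*\Gamma)=\iota^*\cl^M_p(\Gamma)$ is not a citation: since $\cl^X_p$ exists only through the diagram, you must verify the equality against that definition, i.e.\ check that $\pi^*\iota^*\cl^M_p(\Gamma)=\cl^{\tilde{Y}}_p(\pi^*\iota^*\Gamma)$, which does follow because $\iota\circ\pi:\tilde{Y}\to M$ is a morphism of smooth projective varieties (where the compatibility is standard) combined with functoriality of Gysin pullbacks on Chow groups. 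So your alternative is repairable once the missing construction of $\cl^X_p$ is supplied; the paper's operational route has the advantage of handing you exactly the needed compatibility on the singular $X$ without this extra check.
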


\begin{proof}
For the exactness of the bottom row, we use the Mayer-Vietoris sequence
$$
\cdots \to H^{2p-1}(Z,\Q)   \to H^{2p}(X,\Q) \to H^{2p}(Y,\Q) \oplus H^{2p}(Y,\Q) \to H^{2p}(Z,\Q) \to \cdots 
$$
and take the exact functor $\gr^W_{2p}$.

As for the existence of $\cl^X_p$, since $\iota^*(\cl^{\tilde{Y}}_p(\pi^*(\xi))) = \cl^Z_p (\iota^*\pi^*(\xi)) =0$ for any $\xi \in \Ch^p(X;\Q)$, we have $\cl^{\tilde{Y}}_p(\pi^*(\xi))  \in \ker (\iota^*) =  \im(\pi^*)$. Injectivity of $\pi^*$ gives rise to a unique $\alpha \in H^{2p}(X,\Q)$ such that $\pi^*(\alpha ) = \cl^{\tilde{Y}}_p (\pi^*(\xi))$. Define $\cl^X_p(\xi) = \alpha$. Commutativity of the diagram \eqref{eq:diag-1-NC} follows from the definition. 

For the last statement, it is enough to show that $c_p(\scE)\in H^{2p}(X,\Q)$ lifts to $\Ch^p(X;\Q)$ for a vector bundle $\scE$ on $X$. Fulton \cite[chap 3]{fulton1} defines Chern classes, which we denote by $c_p^{\rm fulton}$, as operators $c_p^{\rm fulton}(\scE): \Ch^*(X;\Q)\to \Ch^{*+p}(X;\Q)$. Letting $\Xi =c_p^{\rm fulton}(\scE)([X])\in \Ch^p(X;\Q)$, and using the compatibilities given \cite[chap 19]{fulton1} shows that $\Xi$ maps to $c_p(\scE)$ under the cycle map.
 \end{proof}

By the universal property of the pushout, there exists a unique map $q: X \to Y$ such that the following diagram commutes. 
\begin{equation}\label{eq:diag-2-NC}
\xymatrix{
Z \coprod Z \ar[r]^{i \coprod i \quad } \ar[d] & Y \coprod Y \stackrel{\rm set}{=} \tilde{Y} \ar[d]^{\pi} \ar@/^1pc/[ddr] &  \\
Z \ar[r]^j  \ar@/_1pc/[drr] & X \ar[dr]^q & \\
&& Y
}
\end{equation}

\begin{lemma}\label{lemma:nc-3}
In the notation as above, we have
\begin{equation}\label{eq:nc-sm}
q^*(H^{2p}(Y,\Q)) \cong \{(\alpha,\alpha) \mid \alpha \in H^{2p}(Y,\Q) \} \subseteq  H^{2p}_{\rm sm}(X,\Q).
\end{equation}
Furthermore, if the Hodge conjecture holds for $Y$ in degree $2p$, then
$$
q^*H^{p,p}(Y,\Q) \subseteq \im (\ch^X_p) 
$$
where $\ch_p$ denotes the $p$-th component of the Chern character.
\end{lemma}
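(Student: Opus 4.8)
The plan is to handle the two assertions in turn. For the isomorphism \eqref{eq:nc-sm} I would first note that the inclusion $q^*(H^{2p}(Y,\Q))\subseteq H^{2p}_{\rm sm}(X,\Q)$ is immediate from the definition of the smooth center, since $q\colon X\to Y$ is a morphism to a smooth variety. To identify the image precisely, I would feed $q^*(H^{2p}(Y,\Q))$ into the injective map $\pi^*\colon \gr^W_{2p}H^{2p}(X,\Q)\hookrightarrow H^{2p}(Y,\Q)\oplus H^{2p}(Y,\Q)$ coming from the exact bottom row of \lemref{lemma:cl2-vs-c2}. Since $H^{2p}(Y,\Q)$ is pure of weight $2p$ and $q^*$ is a morphism of mixed Hodge structures, $q^*(H^{2p}(Y,\Q))$ is pure of weight $2p$; being pure it meets $W_{2p-1}H^{2p}(X,\Q)$ trivially, hence maps isomorphically onto its image in $\gr^W_{2p}H^{2p}(X,\Q)$. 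Reading off diagram \eqref{eq:diag-2-NC} that $q\circ\pi\colon \tilde Y=Y\coprod Y\to Y$ is the fold map (the identity on each of the two copies), I get $\pi^*q^*(\alpha)=(q\pi)^*(\alpha)=(\alpha,\alpha)$, and injectivity of $\pi^*$ promotes this to $q^*(H^{2p}(Y,\Q))\cong\{(\alpha,\alpha)\mid \alpha\in H^{2p}(Y,\Q)\}$.

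For the second statement the engine is functoriality of the Chern character combined with the Hodge conjecture on the smooth factor $Y$. Given $\beta\in H^{p,p}(Y,\Q)=H^{2p}(Y,\Q)\cap F^pH^{2p}(Y,\C)$, the Hodge conjecture in degree $2p$ for $Y$ makes $\beta$ algebraic; since $Y$ is smooth, the rational span of codimension-$p$ algebraic cycle classes coincides with $\im[\ch_p\colon K^0(Y)\otimes\Q\to H^{2p}(Y,\Q)]$, so I may write $\beta=\ch_p(\xi)$ for some $\xi\in K^0(Y)\otimes\Q$. Pulling back along $q$ and invoking the functoriality identity $q^*\ch_p=\ch^X_p\,q^*$ yields $q^*\beta=\ch^X_p(q^*\xi)$ with $q^*\xi\in K^0(X)\otimes\Q$, which exhibits $q^*\beta\in\im(\ch^X_p)$ as required.

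The one point that genuinely needs checking, and where I expect the main (if modest) difficulty to lie, is the functoriality identity $q^*\ch_p=\ch^X_p\,q^*$: the operator $\ch^X_p$ is not an a priori functorial gadget but is assembled from the Fulton Chern character into the Chow group followed by the hand-built cycle-class map $\cl^X_p$ of \lemref{lemma:cl2-vs-c2}. To verify it I would reduce everything to the smooth model $\tilde Y=Y\coprod Y$ by applying the injective $\pi^*$ to both sides. On one side $\pi^*q^*\beta=(q\pi)^*\beta$, the pullback of $\beta$ along the fold map of smooth varieties; on the other, the commutativity of \eqref{eq:diag-1-NC} together with the compatibilities of the Fulton Chern character under pullback \cite[chap.\ 19]{fulton1} give $\pi^*\ch^X_p(q^*\xi)=\cl^{\tilde Y}_p(\ch_p(\pi^*q^*\xi))$. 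Since $\pi^*q^*=(q\pi)^*$ and $q\pi$ is a morphism of smooth varieties, the ordinary functoriality of the Chern character on $Y$ and $\tilde Y$ (where $\cl^{\tilde Y}_p\circ\ch_p$ is the usual cohomological Chern character) makes the two expressions agree, and injectivity of $\pi^*$ then lifts the equality back to $X$. If instead one simply reads $\ch^X_p$ as the ordinary cohomological Chern character valued in $H^{2p}(X,\Q)$, the functoriality is automatic and this last step collapses; everything else is formal.
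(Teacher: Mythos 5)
Your proposal is correct and follows essentially the same route as the paper: the inclusion in $H^{2p}_{\rm sm}(X,\Q)$ is read off from the definition of the smooth center, the diagonal identification comes from $\pi^*q^*=(q\pi)^*$ being the fold-map pullback, and the second assertion is proved by invoking the Hodge conjecture on $Y$ to write a Hodge class as a rational combination of Chern characters, then pulling back along $q$ and using injectivity of $\pi^*$ together with functoriality of the Chern character. Your extra care about whether $\ch^X_p$ is the cohomological Chern character (where functoriality is automatic, as in the paper) versus the Fulton--Chow-theoretic one is a reasonable refinement, but the paper simply works with the former, exactly as in your final remark.
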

\begin{proof}
\eqref{eq:nc-sm} follows immediately from the definition of the smooth center. For the second statement,
assume that the Hodge conjecture holds for $Y$ in degree $2p$.
Then any Hodge $(p,p)$-cycle $\alpha$ can be written as $\alpha = \sum_{i} k_i \, \ch^Y_p ([\scE_i])$
where $\scE_i$ are vector bundles on $Y$, and $k_i \in \Q$. Since $\pi^*$ is injective and
$$
\pi^*(q^*(\alpha))= \pi^* q^*\left(\sum_i k_i \,\ch^Y_p([\scE_i]) \right) = \pi^* \ch^X_p  \left(\sum_i k_i\, [q^*(\scE_i)]\right),
$$
it follows that
$$
q^*(\alpha) = \ch^X_p  \left(\sum k_i\, [q^*(\scE_i)]\right) \in \im(\ch^X_p).
$$
Therefore, any $\alpha \in H^{p,p}(Y,\Q)$ gives rise to a class $q^*(\alpha) \cong (\alpha, \alpha)$ in $ \im(\ch^X_p) $. 
\end{proof}

\subsection{Bloch's example} (cf. \cite[Appendix A]{Jannsen, lewis})
Let $S_0$ be a general hypersurface in $\PP^3$ defined over $\ol{\Q}$ of degree $d \geq 4$, and $p \in S_0(\C)$ be a $\ol{\Q}$-generic point. Let $\sigma: P = {\rm Bl}_p \PP^3 \to \PP^3$ and $S = {\rm Bl}_p S_0$, and set $X = P \coprod_S P$. The Mayer-Vietoris sequence gives rise to an exact sequence
\begin{equation}\label{eq:bloch-1}
0 \to H^4(X,\Q) \to H^4(P,\Q) \oplus H^4(P,\Q) \to H^4 (S,\Q) \to 0.
\end{equation}
Let $h$ be the cohomology class of a general hyperplane in $\PP^3$ and $e = [E]$ be the cohomology class of the exceptional divisor of the blow-up $\sigma: P \to \PP^3$. Then
$$
H^4(S,\Q)\cong \Q \quad \text{and} \quad H^4(P,\Q) \cong H^4 (\PP^3, \Q) \oplus H^0(p_0,\Q) = \Q \, h^2 \oplus \Q \, e^2
$$ 
with intersection numbers $(h^2, S)_P = d$ and $(e^2 , S)_P = -1$.
From \eqref{eq:bloch-1}, we get 
\begin{equation}\label{eq:X-Bloch}
\begin{aligned}
H^4(X,\Q) &\cong \{(\alpha, \beta) \in H^4(P,\Q) \oplus H^4(P,\Q) \mid (\alpha, S)_P - (\beta,S)_P =0 \}\\
&=\Q \, (h^2, -de^2) \oplus  \Q \, (e^2, e^2)  \oplus \Q \, (0, h^2+de^2).
\end{aligned}
\end{equation}
This consists entirely of Hodge (2,2)-cycles.

\medskip
In the letter to Jannsen  \cite{Jannsen}, Bloch showed that a cycle $((d-1)h^2, \, d(h^2+e^2))$ in $H^4(X,\Q)$ cannot be in the image of $\cl^X_2$, and hence 
 \begin{equation}\label{eq:B-cycle}
\gamma \stackrel{\rm set}{=} ((d-1)h^2, \, d(h^2+e^2)) \notin  \im [\ch_2: K^0(X)\otimes \Q \to H^4(X,\Q)].
 \end{equation}
by Lemma \ref{lemma:cl2-vs-c2}. Let 
\begin{equation}\label{eq:frak-H}
\mathfrak{H} = \sum \{h^*(\beta) \in H^4_{\rm sm}(X,\Q) \mid (T, \, h: X \to T)\in \mathcal{C}(X),\, \beta \in H^4(T,\Q)_{\rm alg} \} 
\end{equation}
be the subspace of cycles in $H^4_{\rm sm}(X,\Q)$ coming from algebraic cycles in degree 4 on a smooth projective variety.

\begin{prop}\label{ex:Bloch}
Let $X$ be the variety  above. Then, 
\begin{enumerate}
\item [(i)] $\im (\ch_2)  \cong \Q \,  (e^2, e^2)  \oplus  \Q \,  (h^2, h^2)$,
\item [(ii)] $\mathfrak{H}  \subseteq q^*(H^4(P,\Q)) $, 
\item [(iii)] If $H^4_{\rm sm}(X,\Q)=\mathfrak{H}$ (e.g. if the Hodge conjecture holds), then $H^4_{\rm sm}(X,\Q) = \im (\ch_2)  = q^*(H^4(P,\Q)) \subsetneq H^4(X,\Q)$. 
\end{enumerate}
where $q: X\to P$ is the map as in the diagram \eqref{eq:diag-2-NC}.
\end{prop}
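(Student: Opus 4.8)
The plan is to pin down the image of the singular cycle class map $\cl^X_2$ \emph{exactly}, and then to sandwich every subspace in sight between $q^*(H^4(P,\Q))$ and $\im[\cl^X_2:\Ch^2(X;\Q)\to H^4(X,\Q)]$. First I would record, using \lemref{lemma:nc-3} for the rational threefold $P$, that $q^*(H^4(P,\Q))\cong\{(\alpha,\alpha):\alpha\in H^4(P,\Q)\}=\Q\,(h^2,h^2)\oplus\Q\,(e^2,e^2)$, a codimension-one subspace of the three-dimensional $H^4(X,\Q)$ of \eqref{eq:X-Bloch}; the same lemma also gives $q^*(H^4(P,\Q))\subseteq\im(\ch_2)$, since $P$ is rational and so every class in $H^4(P,\Q)=H^{2,2}(P,\Q)$ is algebraic. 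A one-line computation with the intersection numbers $(h^2\cdot S)_P=d$ and $(e^2\cdot S)_P=-1$ shows that Bloch's class $\gamma=((d-1)h^2,\,d(h^2+e^2))$ does \emph{not} lie on this diagonal.

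The heart of the matter is the identification $\im[\cl^X_2]=q^*(H^4(P,\Q))$. The inclusion $\supseteq$ follows from $q^*(H^4(P,\Q))\subseteq\im(\ch_2)\subseteq\im[\cl^X_2]$, the last step being the final assertion of \lemref{lemma:cl2-vs-c2}. For the reverse inclusion, Bloch's theorem (the sentence preceding \eqref{eq:B-cycle}) tells us $\gamma\notin\im[\cl^X_2]$, so this image is a \emph{proper} subspace of $H^4(X,\Q)$; since it already contains the codimension-one space $q^*(H^4(P,\Q))$ yet omits $\gamma\notin q^*(H^4(P,\Q))$, it can only equal $q^*(H^4(P,\Q))$. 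Part (i) is then immediate, for $q^*(H^4(P,\Q))\subseteq\im(\ch_2)\subseteq\im[\cl^X_2]=q^*(H^4(P,\Q))$ forces $\im(\ch_2)=\Q\,(e^2,e^2)\oplus\Q\,(h^2,h^2)$.

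For (ii) the one genuinely new ingredient is the inclusion $\mathfrak{H}\subseteq\im[\cl^X_2]$. Given $(T,h)\in\mathcal{C}(X)$ with $T$ smooth projective and $\beta=\cl_T(W)\in H^4(T,\Q)_{\alg}$, I would use that, $T$ being smooth, the graph of $h$ is a regular embedding, so Fulton's refined Gysin construction produces a pullback cycle $h^*W\in\Ch^2(X;\Q)$. It then remains to check $\cl^X_2(h^*W)=h^*\beta$: applying $\pi^*$, which is injective on $H^4(X,\Q)$, and using the commutative square of \lemref{lemma:cl2-vs-c2} together with functoriality $\pi^*h^*=(h\circ\pi)^*$, reduces this to the equality $\cl^{\tilde Y}_2((h\pi)^*W)=(h\pi)^*\beta$ on the smooth variety $\tilde Y=P\coprod P$, which is standard. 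Thus $h^*\beta\in\im[\cl^X_2]=q^*(H^4(P,\Q))$, giving (ii). Part (iii) is then bookkeeping: if $H^4_{\rm sm}(X,\Q)=\mathfrak{H}$, the chain $q^*(H^4(P,\Q))\subseteq\im(\ch_2)\subseteq H^4_{\rm sm}(X,\Q)=\mathfrak{H}\subseteq q^*(H^4(P,\Q))$ collapses to equalities, while $q^*(H^4(P,\Q))\subsetneq H^4(X,\Q)$ is the dimension count $2<3$; the parenthetical hypothesis holds because, under the Hodge conjecture, any generator $h^*\beta$ of $H^4_{\rm sm}(X,\Q)$ has $\beta$ a Hodge $(2,2)$-class on the smooth projective $Y$, hence algebraic, so $h^*\beta\in\mathfrak{H}$.

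The step I expect to require the most care is the compatibility $\cl^X_2(h^*W)=h^*\beta$ used in (ii): one must match Fulton's operational pullback of cycles along $h:X\to T$ against the \emph{ad hoc} cycle class map $\cl^X_2$ manufactured in \lemref{lemma:cl2-vs-c2} through the resolution $\pi$. Once that compatibility is secured, the whole proposition is linear algebra layered on top of Bloch's non-algebraicity of $\gamma$, with the codimension-one geometry of the diagonal $q^*(H^4(P,\Q))$ doing the rest.
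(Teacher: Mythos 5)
Your proposal is correct, and parts (i) and (iii) run essentially as in the paper: the same sandwich $q^*(H^4(P,\Q))\subseteq \im(\ch_2)$ coming from \lemref{lemma:nc-3}, properness forced by Bloch's class, and the same collapsing chain of inclusions for (iii); your only repackaging is to carry out the dimension count one level up, at $\im[\cl^X_2]$, which identifies $\im[\cl^X_2]=q^*(H^4(P,\Q))$ exactly rather than just $\im(\ch_2)$. Part (ii) is where you genuinely diverge. The paper stays inside $K$-theory: it writes $\alpha-q^*(\iota_1^*\alpha)=k\,(0,h^2+de^2)$ using the kernel computation $\ker[\iota_1^*]\cong\gr^W_4H^4_c(P-S,\Q)$, and kills $k$ by contradiction --- if $k\neq 0$ then $\gamma\in q^*H^4(P,\Q)+t^*H^4(T,\Q)_{\rm alg}$, and since an algebraic class on a smooth projective variety is a $\Q$-combination of Chern classes, which pull back along \emph{arbitrary} morphisms via $K^0$, this would put $\gamma$ in $\im(\ch_2)$, contradicting \eqref{eq:B-cycle}. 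You instead push $\mathfrak{H}$ directly into $\im[\cl^X_2]$ by pulling back cycles through Chow groups, via Fulton's graph/Gysin map $h^*:\Ch^2(T;\Q)\to\Ch^2(X;\Q)$ and the compatibility $\cl^X_2(h^*W)=h^*\cl_T(W)$. Your route is more direct (no kernel computation, no contradiction) and yields slightly more information; its cost is exactly the compatibility you flag as delicate, and it is more delicate than you may realize: your identity $\pi^*(h^*W)=(h\pi)^*W$ must be checked against the $\pi^*$ on Chow groups constructed in \lemref{lemma:cl2-vs-c2}, and the component inclusions $P\hookrightarrow X$ are in fact \emph{not} regular embeddings along the double locus $S$ (the local equation of a branch is a zerodivisor on a normal crossing variety), so that $\pi^*$ --- and hence your compatibility --- needs a justification beyond citing Fulton's Gysin maps for regular embeddings. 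The paper's $K$-theoretic detour exists precisely to sidestep this: pullback of vector bundles is functorial with no hypotheses on the morphism or the singularities. One last small point: your justification of the parenthetical hypothesis in (iii) is off as stated, since for $h^*\beta\in H^4_{\rm sm}(X,\Q)$ the class $\beta$ need not itself be a Hodge class; the correct argument splits $H^4(T,\Q)=\ker(h^*)\oplus\ker(h^*)^{\perp}$ with respect to a polarization and observes that $\ker(h^*)^{\perp}$ injects into the pure type $(2,2)$ structure $H^4(X,\Q)$, hence consists of Hodge classes, so $h^*\beta=h^*w$ with $w$ Hodge and algebraic under the Hodge conjecture (the paper, for its part, never proves this parenthetical at all).
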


\begin{proof}
Since the Hodge conjecture holds for $P$ in degree 4, by Lemma \ref{lemma:nc-3} we have $q^*(H^4(P,\Q)) \subseteq \im (\ch_2)$. Then
$$
2 = \dim q^*(H^4(P,\Q))  \leq \dim \, \im(\ch_2)<\dim H^4(X,\Q) =3.
$$
Hence 
$\im (\ch_2) = q^*(H^4(P,\Q)) \cong \Q \, (h^2,h^2) \oplus \Q \, (e^2,e^2)$ as we claimed in (i).

\medskip
In order to show (ii), let $\alpha \in \mathfrak{H} \subseteq H^4_{\rm sm} (X,\Q)$ and let $t: X \to T$ be a morphism to a smooth projective variety $T$ such that 
$\alpha = t^*(\beta ) $ and $\beta \in H^4(T,\Q)_{\rm alg}$.  
\begin{equation}\label{eq:diag-3-NC}
\xymatrix@=20pt{
P \ar@{^(->}[r]_{\iota_1} \ar@/^1pc/[rr]^{\rm id_P} & X \ar[r]_q \ar[d]^t & P&& H^4(P,\Q) \ar@/^1pc/[rr]^{{\rm id}^*_P}\ar[r]_{q^*} & H^4(X,\Q)\ar[r]_{\iota^*_1} & H^4(P,\Q) \\
&T& && &H^4(T,\Q) \ar[u]^{t^*}&
}
\end{equation}
 Let $\iota_j: P \to X$ be the composition of the canonical injection $P \to P \coprod P$ followed by $\pi: P \coprod P \to X$ for $j=1,2$. Since $q \circ \iota_j = {\rm id}_P$ for $j=1,2$, we have
$$
\iota^*_1 ( \alpha - q^*(\iota^*_1(\alpha) )) = \iota^*_1(\alpha) - \iota^*_1 q^*\iota^*_1 (\alpha) = \iota^*_1(\alpha) - \iota^*_1(\alpha)=0,
$$
and hence 
$$
\alpha - q^*(\iota^*_1(\alpha)) \in \ker[\iota^*_1: H^4(X,\Q) \to H^4(P,\Q)] \stackrel{\rm (*)}{\cong} \gr^W_4 H^4_c (P-S,\Q) \cong (h^2+ de^2) \cdot \Q
$$
where ${\rm (*)}$ is from the following commutative diagram with exact rows
\begin{equation}\label{eq:Bloch-kernel}
\xymatrix{
0 \ar[r] & \gr^W_4 H^4_c (X-P,\Q)\ar[d]^{\cong} \ar[r] & H^4(X,\Q) \ar[r]^{\iota^*_1}  \ar[d]^{\iota^*_2}& H^4(P,\Q) \ar[d]^{j^*}  & \\
0 \ar[r] & \gr^W_4 H^4_c(P-S,\Q) \ar[r] & H^4(P,\Q) \ar[r]^{j^*} & H^4(S,\Q) \ar[r] & 0  
}
\end{equation}
Hence, there exists $k \in \Q$ such that
\begin{equation}\label{eq:Bloch-k}
k\, (0, h^2+de^2) =\alpha - q^*(\iota^*_1(\alpha)) =t^*(\beta) - q^*(\iota^*_1(\alpha)). 
\end{equation}
We show that $k=0$. Suppose $k \neq 0$. Observe
$$
\begin{aligned}
\gamma= ((d-1) h^2 ,& \, d(h^2+e^2)) = (d-1) (h^2, h^2) + (0, h^2+de^2)  \\
&= (d-1) q^*(h^2) + \frac{1}{k}\left(t^*(\beta) -q^*(\iota^*_1(\alpha)) \right)  \\
&= q^*( (d-1) h^2 -  \frac{1}{k}\, \iota^*_1 (\alpha)) + \frac{1}{k}\, t^*(\beta) \in q^*H^4(P,\Q)  + t^* H^4(T,\Q)_{\rm alg} 
\end{aligned}
$$
i.e., $\gamma \in \mathfrak{H}$. Since any algebraic cycle in a smooth projective variety can be realized as a finite sum of Chern classes of vector bundles on the variety, this observation implies $\gamma \in \im (\ch_2 ) \otimes \Q$, which contradicts to \eqref{eq:B-cycle}. Thus $k=0$. Now \eqref{eq:Bloch-k} implies $\alpha  = q^*(\iota^*_1(\alpha)) \in q^*(H^4(P,\Q))$. Hence, $\mathfrak{H} \subseteq q^*(H^4(P,\Q))$. 

\medskip 
For (iii), suppose $H^4_{\rm sm}(X,\Q) = \mathfrak{H}$. Then, (ii) and Lemma \ref{lemma:nc-3} imply that
$$
q^*( H^4(P,\Q)) \subseteq \im (\ch_2) \otimes \Q \subseteq H^4_{\rm sm}(X,\Q) =\mathfrak{H}\subseteq q^*(H^4(P,\Q)).
$$
Therefore, $\im (\ch_2) \otimes \Q = H^4_{\rm sm}(X,\Q) =  q^*(H^4(P,\Q)) =\Q \,(e^2, e^2) \oplus \Q \, (h^2,h^2)$.
\end{proof}

\begin{rmk}
In fact, for this $X$, the de Rham filtration $F^2_{\rm DR}H^4(X,\C)$ \eqref{eq:dr-filtration} coincides with the Hodge filtration $F^2 H^4(X,\C)$. In order to show this, it is enough to prove $F^2_{\rm DR}H^4(X,\C) \supseteq F^2 H^4(X,\C)$. Let $\tau^1_X$ be the kernel of $\Omega^1_X \to \pi_* \Omega^1_{\tilde{P}}$ where $\pi: \tilde{P} = P \coprod P \to X$ is the desingularization of $X$,  and let $\tau^k_X = \wedge^k \tau^1_X$. There exists an exact sequence \cite[Proposition 1.5 (1)]{friedman}:
$$
0 \to \tau^k_X \to \Omega^k_X \to \pi_* \Omega^k_{\tilde{P}} \to i_* \Omega^k_S \to 0  \quad \text{for $k \geq 1$}
$$
where $i:  S \hookrightarrow X$. We can split this into two short exact sequences:
\begin{equation}\label{eq:ses-1}
0 \to \tau^k_X \to \Omega^k_X {\to} \bar{\Omega}^k_X \to 0,\qquad 0 \to \bar{\Omega}^k_X \to \pi_* \Omega^k_{\tilde{P}} \to i_* \Omega^k_S \to 0 
\end{equation}
where $\bar{\Omega}^k_X  = \Omega^k_X / \tau^k_X$. By direct computation in cohomologies associated to short exact sequences \eqref{eq:ses-1}, we get
$$
H^2(X, \bar{\Omega}^2_X) = \rho (H^2(X,\Omega^2_X)) \subseteq  \rho \left( \im[\bH^4(X,\Omega^{\geq 2}_X) \to \bH^4(X, \Omega^{\geq \dt}_X)]\right) = F^2_{\rm DR} H^4(X,\C).
$$
where $\rho: \Omega^{\dt}_X \to \bar{\Omega}^{\dt}_X$ is the morphism of filtered complexes \cite{dubois, ArapuraKang}. Since $F^2H^4(X,\C) = \bigoplus_{p+q=4} H^q (X, \bar{\Omega}^p_X)  = H^2(X,\bar{\Omega}^2_X)$ by (\cite[Proposition 1.5]{friedman}), we are done. 
 
Since $H^4(X,\Q) $ carries a pure Hodge structure of type $(2,2)$, this is an example that the smooth center $H^4_{\rm sm}(X,\Q)$ provides a stronger constraint than the de Rham filtration in degree 4 that we considered in \cite{ArapuraKang}.
\end{rmk}

\subsection{Srinivas' Example} \cite{bv} Let $Y$ be a smooth hypersurface of degree 2 in $\PP^5$ and $Z$ be a subvariety of $Y$ cut by a smooth hypersurface of degree $d \geq 3$. Then $\deg \, Z = 2d \geq 6$ and $H^{3,0}(Z) \neq 0$. Let $X = Y \coprod_Z Y$. By taking an exact functor $\gr^W_4$ on the Mayer-Vietoris exact sequence, we get
\begin{equation}\label{eq:srinivas-2}
0 \to \gr^W_4 H^4(X,\Q) \to H^4(Y,\Q)^{\oplus 2} \stackrel{s}{\to} H^4(Z,\Q) \to \gr^W_4 H^5(X,\Q) \to 0 
\end{equation}
We have the Lefschetz decomposition: 
\begin{equation}\label{eq:primitive-decomposition}
H^4(Y,\Q) \cong \bigoplus^2_{k=0} L^k H^{4-2k}(Y,\Q)_{\rm prim}
\end{equation}
where $H^i (Y,\Q)_{\rm prim} = \ker[L^{5-i}: H^i(Y,\Q) \to H^{10-i}(Y,\Q)]$ is the $i$-th primitive cohomology of $Y$. Note
$$
\begin{aligned}
H^4 (Y,\Q)_{\rm prim} &= \ker [L: H^4(Y,\Q) \to H^6(Y,\Q)], \\
H^2 (Y,\Q)_{\rm prim} &= \ker[L^3: H^2(Y,\Q) \to H^8(Y,\Q)] = \{0\}, \\
H^0(Y,\Q)_{\rm prim} &=\ker[L^5: H^0(Y,\Q) \to H^{10}(Y,\Q)=0] = H^0(Y,\Q),
\end{aligned}
$$
Furthermore, since $Y$ is a quadric hypersurface of even dimension, it contains two families of planes \cite[Proposition p.735]{gh}. Let $\alpha \in \Ch^2(Y;\Q)$ be the equivalence class of difference of two planes belonging to different families of planes. Then, its cohomology class $\bar{\alpha}\stackrel{\rm set}{=} \cl^Y_2(\alpha) \in H^4(Y,\Q)_{\rm prim}$ and $i^*(\bar{\alpha}) =0$; on the other hand, $H^{3,0}(Z) \neq 0$ implies that $i^*(\alpha)$ is a nonzero class in the Griffiths group $\rm{Griff}^2(Z;\Q)=\Ch^2(Z;\Q)_{\rm hom}/\Ch^2(Z;\Q)_{\rm alg}$ where $i: Z \hookrightarrow Y$. An explanation can be found in \cite[\S 5.2]{bv}. Since $\dim H^4(Y;\Q)_{\rm prim} =1$, we may choose $\bar{\alpha}$ as a generator of $H^4(Y,\Q)_{\rm prim}$. By putting these observation together, we get
$$
H^4(Y,\Q) = H^4(Y,\Q)_{\rm prim} \oplus \Q \, h^2 = \Q \, \bar{\alpha}  \oplus \Q\, h^2 .
$$
where $h = [Y \cap H] \in H^2(Y,\Q)$ is a cohomology class of a general hyperplane section of $Y$. Then from \eqref{eq:srinivas-2} we have
$$
\gr^W_4 H^4(X,\Q)  = \ker[s: H^4(Y,\Q)^{\oplus 2} \to H^4(Z,\Q)] \cong  \Q\, (\bar{\alpha},0) \oplus \Q\, (0, \bar{\alpha}) \oplus \Q \, (h^2,h^2). 
$$
Diagram \eqref{eq:diag-1-NC} implies that $(\bar{\alpha},0)$ and $(0, \bar{\alpha})$ are nonzero Hodge $(2,2)$-cycles in $\gr^W_4 H^4(X,\Q)$ not contained in $\im [ \cl^X_2: \Ch^2(X;\Q)\to H^4(X,\Q)]$. Thus by Lemma \ref{lemma:cl2-vs-c2}, we have $(\bar{\alpha},\,0) ,\, (0, \, \bar{\alpha}) \notin \im[\ch_2: K^0(X) \otimes \Q \to H^4(X,\Q)]$. 

\begin{prop}
Let $X$ be the variety constructed by Srinivas \cite[5.2]{bv}. Then 

\begin{enumerate}
\item [(i)] $\im(\ch_2) \otimes \Q = \Q\,  (\bar{\alpha},\, \bar{\alpha}) \oplus \Q\,  (h^2,\, h^2) $

\item [(ii)] $\mathfrak{H} \subseteq q^*(H^4(Y,\Q))$, if the Hodge conjecture holds for $Y$ in degree 4, where $\mathfrak{H}$ is defined in \eqref{eq:frak-H}, 
 
 \item [(iii)] If  $H^4_{\rm sm}(X,\Q) =\mathfrak{H}$, then 
 $H^4_{\rm sm}(X,\Q) = \im (\ch_2) \otimes \Q = q^*(H^4(Y,\Q)) \subsetneq \gr^W_4 H^4(X,\Q)$.
 \end{enumerate}
\end{prop}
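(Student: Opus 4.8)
The plan is to run the argument of \propref{ex:Bloch} essentially verbatim, substituting the three‑dimensional space
$$\gr^W_4 H^4(X,\Q)\cong \Q\,(\bar\alpha,0)\oplus\Q\,(0,\bar\alpha)\oplus\Q\,(h^2,h^2)$$
just computed from \eqref{eq:srinivas-2} for the space $\Q(h^2,-de^2)\oplus\Q(e^2,e^2)\oplus\Q(0,h^2+de^2)$ of Bloch's example, and substituting the two distinguished non‑algebraic classes $(\bar\alpha,0),(0,\bar\alpha)$ — which we have already shown lie outside $\im(\ch_2)$ — for Bloch's class $(0,h^2+de^2)$. For (i), I would first note that the quadric $Y$ satisfies the Hodge conjecture in degree $4$ (its $H^4$ is spanned by the algebraic classes $\bar\alpha$ and $h^2$), so \lemref{lemma:nc-3} gives $q^*H^4(Y,\Q)\subseteq\im(\ch_2)$; since $q\circ\iota_1=\mathrm{id}_Y$ the map $q^*$ is injective and $q^*H^4(Y,\Q)=\{(\beta,\beta)\}=\Q(\bar\alpha,\bar\alpha)\oplus\Q(h^2,h^2)$ is two‑dimensional. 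By \lemref{lemma:cp-in-smooth-coh} the image $\im(\ch_2)$ sits inside $H^4_{\rm sm}(X,\Q)$, which is pure of weight $4$ and hence inside the three‑dimensional $\gr^W_4 H^4(X,\Q)$; and $\im(\ch_2)$ cannot fill it, since it misses $(\bar\alpha,0)$. Thus $2\le\dim\im(\ch_2)<3$, forcing $\im(\ch_2)=q^*H^4(Y,\Q)$.

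For (ii), let $\iota_1,\iota_2:Y\to X$ be the inclusions of the two copies, so $q\circ\iota_j=\mathrm{id}_Y$, and under the embedding $\gr^W_4 H^4(X,\Q)\hookrightarrow H^4(Y,\Q)^{\oplus2}$ the pullback $\iota_j^*$ is the $j$‑th projection. From the explicit basis one then reads off
$$\ker\bigl[\iota_1^*:\gr^W_4 H^4(X,\Q)\to H^4(Y,\Q)\bigr]=\Q\,(0,\bar\alpha).$$
Now take $\alpha=t^*(\beta)\in\mathfrak{H}$, with $t:X\to T$ a morphism to a smooth projective $T$ and $\beta\in H^4(T,\Q)_{\rm alg}$; since $\mathfrak{H}\subseteq H^4_{\rm sm}(X,\Q)$ is pure of weight $4$, I regard $\alpha$ inside $\gr^W_4 H^4(X,\Q)$. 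Because $\iota_1^*q^*=\mathrm{id}$, the class $\alpha-q^*(\iota_1^*\alpha)$ lies in $\ker\iota_1^*$, so $\alpha-q^*(\iota_1^*\alpha)=k\,(0,\bar\alpha)$ for some $k\in\Q$. The crux is to show $k=0$: if $k\ne0$ then
$$(0,\bar\alpha)=\tfrac1k\bigl(t^*\beta-q^*\iota_1^*\alpha\bigr),$$
where $q^*\iota_1^*\alpha\in q^*H^4(Y,\Q)\subseteq\im(\ch_2)$ by (i), and $t^*\beta\in\im(\ch_2)$ because $\beta$ is algebraic on the smooth projective $T$, hence a combination $\sum k_i\ch_2^T(\xi_i)$ with $t^*\beta=\sum k_i\ch_2^X(t^*\xi_i)$. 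This puts $(0,\bar\alpha)\in\im(\ch_2)$, contradicting the fact established before the proposition. Hence $k=0$, $\alpha=q^*(\iota_1^*\alpha)\in q^*H^4(Y,\Q)$, and $\mathfrak{H}\subseteq q^*H^4(Y,\Q)$.

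Part (iii) is then formal: assuming $H^4_{\rm sm}(X,\Q)=\mathfrak{H}$, combine (ii), \lemref{lemma:nc-3}, and $\im(\ch_2)\subseteq H^4_{\rm sm}(X,\Q)$ to get
$$q^*H^4(Y,\Q)\subseteq\im(\ch_2)\subseteq H^4_{\rm sm}(X,\Q)=\mathfrak{H}\subseteq q^*H^4(Y,\Q),$$
so all four coincide, and by (i) equal $\Q(\bar\alpha,\bar\alpha)\oplus\Q(h^2,h^2)$, which is properly contained in the three‑dimensional $\gr^W_4 H^4(X,\Q)$. I expect the only genuinely delicate point — and the one place where this example departs structurally from Bloch's — to be the weight bookkeeping: here $H^4(X,\Q)$ is \emph{not} pure (the contribution of $H^3(Z)$ with $H^{3,0}(Z)\ne0$ gives a weight‑$3$ piece), so I must argue carefully that every class of $\mathfrak{H}$ really lives in the pure summand $\gr^W_4 H^4(X,\Q)$ and that the kernel identification $\ker\iota_1^*=\Q(0,\bar\alpha)$ is taken there. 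Once that is pinned down, the remaining steps are routine.
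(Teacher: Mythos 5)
Your proof is correct and takes essentially the same route as the paper's, which simply runs the argument of \propref{ex:Bloch} with $P$ and $S$ replaced by $Y$ and $Z$: the same dimension count for (i), the same identification $\ker[\iota_1^*]\cong \Q\,(0,\bar{\alpha})$ followed by the $k=0$ contradiction against $(0,\bar{\alpha})\notin\im(\ch_2)$ for (ii), and the same formal chain of inclusions for (iii). Your explicit weight bookkeeping --- working inside $\gr^W_4 H^4(X,\Q)$ via the injection $H^4_{\rm sm}(X,\Q)\hookrightarrow \gr^W_4 H^4(X,\Q)$, which is needed here since $H^4(X,\Q)$ is not pure --- is exactly what the paper's abbreviated proof leaves implicit.
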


\begin{proof}
Exactly same argument as in Bloch's example shows (i). For (ii) and (iii), let $\alpha \in  \mathfrak{H}$. By diagrams \eqref{eq:diag-3-NC} and \eqref{eq:Bloch-kernel} (replace $P$ and $S$ by $Y$ and $Z$, respectively) we have
$$
\alpha - q^*(\iota^*_1(\alpha) ) \in \ker[\iota^*_1: H^4(X,\Q) \to H^4(Y,\Q)] \cong \gr^W_4 H^4_c( Y-Z,\Q)  = \bar{\alpha}\cdot \Q.
$$
Hence, $k \,(0,\bar{\alpha}) = \alpha - q^*(\iota^*_1(\alpha))$ for some $k \in \Q$. Since we assume the Hodge conjecture for $Y$ in degree 4,  we can use the same argument as in Example \ref{ex:Bloch} to conclude $k=0$ and we get (ii). The proof of (iii) is exactly same as the one given in the Example \ref{ex:Bloch}. 
\end{proof}

\section{More on varieties with normal crossings}

We can construct more varieties with normal crossings $X = Y_1\coprod_Z Y_2$ similar to Bloch's example
\ref{ex:Bloch} with the property
\begin{equation}\label{E000795}
H^4(X, \BQ) \supsetneq H_{\text{sm}}^4(X, \BQ) = \im(c_2)\otimes \BQ
\end{equation}
and we can prove \eqref{E000795} without assuming the Hodge conjecture as in \propref{ex:Bloch}.

Let $X = Y_1\coprod_Z Y_2$ be the variety obtained by glueing two smooth projective varieties $Y_1$
and $Y_2$ transversely along a smooth hypersurface $Z$ in both $Y_1$ and $Y_2$. The correct way to think
of it is that $X$ is given by two embeddings $i_k: Z\hookrightarrow Y_k$ for $k=1,2$ with the Picard group
of $X$ given by
\begin{equation}\label{E000796}
0\xrightarrow{} \Pic(X) \xrightarrow{} \Pic(Y_1)\oplus \Pic(Y_2)
\xrightarrow{i_1^* L_1 - i_2^* L_2} \Pic(Z).
\end{equation}
Such $X$ is not necessarily projective. It is projective if and only if there are ample line bundles
$L_1$ and $L_2$ on $Y_1$ and $Y_2$, respectively, such that $i_1^* L_1 = i_2^* L_2$ on $Z$. Namely,
$L = L_1\oplus L_2$ gives an ample line bundle on $X$ and we can embed $X$ to $\PP^N$ by $|mL|$.

\begin{prop}\label{PROP802}
Let $X = Y_1\coprod_Z Y_2$ be a projective 3-fold with normal crossings satisfying
\begin{equation}\label{E000800}
H^4(Y_k,\Q) = H^{2,2}(Y_k,\Q) \text{ for } k = 1,2 \text{ and }
H^3(Z,\Q) = 0.
\end{equation}
Then $H_{\rm sm}^4(X, \BQ)$ is algebraic in the sense that
\begin{equation}\label{E000802}
H_{\rm sm}^4(X, \BQ) \subset \cl_2(V)
\end{equation}
where $V$ is the kernel of the map $\Ch^2(Y_1; \BQ) \oplus \Ch^2(Y_2; \BQ)
\to
\Ch^2(Z; \BQ)$ sending $(\xi_1, \xi_2)$ to $i_1^* \xi_1 - i_2^* \xi_2$
with $i_1$ and $i_2$ being the embeddings $i_1: Z\hookrightarrow Y_1$ and $i_2: Z\hookrightarrow Y_2$,
respectively.
\end{prop}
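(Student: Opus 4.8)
The plan is to prove the two containments hidden in the statement separately: first that the restriction of any $\alpha \in H^4_{\rm sm}(X,\Q)$ to each component is \emph{algebraic}, and then that these algebraic restrictions can be lifted to a pair of cycles that are compatible at the level of rational equivalence on $Z$ (not merely in cohomology). First I would record the consequences of the hypotheses on the Mayer--Vietoris setup of Lemma \ref{lemma:cl2-vs-c2}. Since $H^3(Z,\Q)=0$, the bottom row there shows that $\pi^*$ is injective and identifies $H^4(X,\Q)$ with $\ker[\iota^*]\subseteq H^4(Y_1,\Q)\oplus H^4(Y_2,\Q)$; hence $H^4(X,\Q)$ is pure of weight $4$ and, because $H^4(Y_k,\Q)=H^{2,2}(Y_k,\Q)$, it consists entirely of $(2,2)$-classes. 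In particular every $\alpha\in H^4_{\rm sm}(X,\Q)$ is a Hodge class and $\pi^*\alpha=(\iota_1^*\alpha,\iota_2^*\alpha)$, where $\iota_k\colon Y_k\hookrightarrow X$ are the two component inclusions.

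The key observation --- which is what lets us avoid the Hodge conjecture --- is that on a smooth projective threefold the whole of $H^4$ lies in the Lefschetz range dual to divisors. Since $\dim Y_k=3$, cup product with the hyperplane class $h_k$ is an isomorphism of Hodge structures $L\colon H^2(Y_k,\Q)\xrightarrow{\ \sim\ }H^4(Y_k,\Q)$ by Hard Lefschetz. Thus any $\gamma\in H^4(Y_k,\Q)=H^{2,2}(Y_k,\Q)$ equals $L(\theta)$ with $\theta=L^{-1}(\gamma)\in H^{1,1}(Y_k,\Q)$, and $\theta$ is algebraic by the Lefschetz theorem on $(1,1)$-classes; hence $\gamma=h_k\cdot\cl_1(D)$ is algebraic and $\cl_2^{Y_k}$ is onto $H^4(Y_k,\Q)$. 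Applying this to $\gamma=\iota_k^*\alpha$ already produces algebraic cycles $\xi_k$ with $\cl(\xi_k)=\iota_k^*\alpha$ and $i_1^*\xi_1=i_2^*\xi_2$ in $H^4(Z,\Q)$; the only thing left is to upgrade this to equality in $\Ch^2(Z;\Q)$.

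For the Chow-level compatibility I would use the smooth-center hypothesis essentially. Write $\alpha=t^*\beta$ with $t\colon X\to T$ a morphism to a smooth projective variety and, after replacing $\beta$ using semisimplicity of polarizable rational Hodge structures, assume $\beta\in H^{2,2}(T,\Q)$. A single Hodge class on a smooth projective \emph{threefold} is automatically algebraic, by exactly the Hard Lefschetz plus $(1,1)$ argument above, so I would reduce to $\dim T=3$: resolving the image $\overline{t(X)}$ and the indeterminacy of the induced maps on the components, one obtains a smooth projective threefold $T_0$, maps $s_k\colon Y_k\to T_0$ with $s_1\circ i_1=s_2\circ i_2=:s_Z$, and a Hodge class $\beta_0\in H^{2,2}(T_0,\Q)$ with $s_k^*\beta_0=\iota_k^*\alpha$. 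Choosing $\zeta_0\in\Ch^2(T_0;\Q)$ with $\cl(\zeta_0)=\beta_0$ and setting $\xi_k=s_k^*\zeta_0$, the Gysin pullbacks give $\cl(\xi_k)=\iota_k^*\alpha$, while $i_1^*\xi_1=s_Z^*\zeta_0=i_2^*\xi_2$ in $\Ch^2(Z;\Q)$ because $s_1\circ i_1=s_2\circ i_2$ as morphisms to $T_0$. Hence $(\xi_1,\xi_2)\in V$ and $\cl_2(\xi_1,\xi_2)=\alpha$, which is \eqref{E000802}.

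The main obstacle is the reduction to $\dim T=3$ while keeping a single morphism out of $X$ --- equivalently, producing the common smooth threefold source $T_0$ compatibly with the gluing along $Z$. When $\overline{t(X)}$ is already a smooth threefold one simply corestricts $t$ and takes $\beta_0=\beta|_{\overline{t(X)}}$; the difficulty is a singular image whose singular locus meets $t(Z)$, where resolving forces modifications of the components $Y_k$ that can alter the restriction to $Z$ by a zero-cycle, and this zero-cycle need not cancel. (Note that on the surface $Z$, which has $q(Z)=0$ by $H^3(Z,\Q)=0$ but possibly $p_g(Z)>0$, intersections of divisor classes are \emph{not} rationally equivalent to multiples of a point in general, so cohomological compatibility alone cannot be promoted to Chow compatibility for free.) Controlling, or arranging to avoid, this discrepancy along $Z$ is the crux; everything else is formal given the threefold Hard Lefschetz input.
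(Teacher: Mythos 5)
Your reduction is sound as far as it goes: purity of $H^4(X,\Q)$, the fact that every Hodge class on a smooth projective threefold is algebraic (Hard Lefschetz plus Lefschetz $(1,1)$), and the correct identification of the real difficulty, namely that the compatibility of the two cycles along $Z$ must hold in $\Ch^2(Z;\Q)$ and not merely in $H^4(Z,\Q)$ (your remark that $p_g(Z)>0$ makes zero-cycles on $Z$ nontrivial is exactly the right reason the cohomological statement cannot be upgraded for free). But the step you propose to resolve this --- replacing $t\colon X\to T$ by maps $s_k\colon Y_k\to T_0$ to a \emph{common smooth threefold} $T_0$ with $s_1\circ i_1=s_2\circ i_2$ --- is not established, and it is precisely where the argument breaks. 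Once you make $t$ an embedding (which you may), the image $\overline{t(X)}$ \emph{is} the connected singular threefold $X$ itself; its desingularization is the disjoint union $Y_1\sqcup Y_2$, which destroys the gluing datum along $Z$, and resolving the indeterminacy of the lifts $Y_k\dashrightarrow T_0$ forces modifications of the $Y_k$, after which the two compositions over $Z$ have no reason to agree as morphisms. So, as you say yourself, the crux is missing, and there is no evident way to supply it within your framework: a connected singular threefold simply cannot be traded for a smooth threefold target without losing the gluing.

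The paper closes this gap by going \emph{up} in dimension rather than down. One embeds $X$ in a smooth projective fourfold $W$ (weak Lefschetz allows cutting the target to dimension $4$, not $3$), chooses $L$ ample with a member $X\cup X'\in|L|$ having simple normal crossings, and takes a general $B\cong\PP^3\subset|L|$ through that member: Lemmas \ref{LEM000} and \ref{LEM001} guarantee that every other member $W_b$, $b\in U=B\setminus\{o\}$, has at worst isolated ADE singularities of type $w^2+x^2+y^2+z^m=0$, $2\le m\le 4$, so that $H^2(W_b,\Q)$ and $H^4(W_b,\Q)$ are pure and Hard Lefschetz holds for them --- this plays the role of your fiberwise threefold $(1,1)$-argument. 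A Hilbert scheme argument then produces a single $\xi\in\Ch^2(M_B;\Q)$ on the smooth total space with $\cl_2(\xi_b)=\omega_b$ for all $b\in U$, and a Leray spectral sequence comparison of $H^4(M_B,\Q)$ with $H^0(B,R^4\rho_*\Q)$ and $H^0(U,R^4\rho_*\Q)$ specializes this equality to the fiber $W_o=X\cup X'$, hence to $X$. Because the final cycle lives on one smooth ambient variety, its restrictions to $Y_1$ and $Y_2$ automatically agree in $\Ch^2(Z;\Q)$ by functoriality of Gysin pullback; the Chow-level compatibility you were struggling to arrange comes for free, which is exactly the mechanism your proposal lacks.
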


We use an argument akin to Lefschetz pencil. Basically, for every map $f: X\to W$ from $X$ to a smooth
projective variety $W$, we can ``fiberize'' $W$ to a family $W/B$
of 3-folds with $f(X)$ contained in a fiber.
Using the fact that the Hodge conjecture holds for 3-folds, i.e.,
for the fibers of $W/B$, we can show that the pull back $f^*\omega$ is
algebraic for every $\omega\in H^4(W, \BQ)$. As in the case of the classical
Lefschetz pencil argument, we need to know the
type of the singularities that a fiber $W_b$ of $W/B$ has. For that purpose, we first
prove the following lemmas:

\begin{lemma}\label{LEM000}
 Let $W$ be a smooth projective variety and $L$ be a line bundle on $W$.
Suppose that $\mO_W(-2p_1 - 2p_2 - ... - 2p_m)$ imposes independent conditions
on $H^0(L)$ for all $m$-tuples of distinct points $p_1, p_2, ..., p_m$ of $W$, i.e.,
the map
\begin{equation}\label{E000805}
 H^0(L)\twoheadrightarrow H^0(L\otimes \mO_W/\mO_W(-2p_1 -2p_2 - ... -2p_m))
\end{equation}
is surjective, where $\mO_W(-lp) = I_p^l$ with $I_p$ the ideal sheaf of a point $p\in W$. 
Then for a general linear subspace $B$ of
$|L| = \PP H^0(L)$ of $\dim B < m$, every member
$S\in B$ has at worst $m-1$ isolated singularties.
\end{lemma}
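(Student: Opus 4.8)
The plan is to bound, by a standard incidence-variety dimension count of Bertini type, the locus of members of $|L|$ that carry too many singular points, using the independent-conditions hypothesis \eqref{E000805} precisely to keep the relevant fibers of the expected dimension. Set $d = \dim W$ and $N = \dim |L| = h^0(L) - 1$. The starting observation is that a member $S = V(s)$ of $|L|$ is singular at a point $p\in W$ exactly when $s$ vanishes to order $\ge 2$ at $p$, i.e. when $s \in H^0(L\otimes \mO_W(-2p))$. First I would form the incidence variety
$$
I_m = \{ (S, p_1, \dots, p_m) \in |L|\times W^m : p_1,\dots,p_m \text{ distinct and } S \text{ singular at each } p_i \}
$$
and project it to the open configuration space $\mathrm{Conf}_m(W)\subseteq W^m$ of $m$-tuples of distinct points, which has dimension $md$. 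Over a tuple $(p_1,\dots,p_m)$ the fiber is $\PP\bigl(H^0(L\otimes \mO_W(-2p_1 - \cdots - 2p_m))\bigr)$, and this is exactly where the hypothesis is used: the surjectivity in \eqref{E000805} says that $\mO_W(-2\sum p_i)$ imposes $m(d+1)$ independent conditions, so $h^0(L\otimes \mO_W(-2\sum p_i)) = h^0(L) - m(d+1)$ for \emph{every} such tuple. Hence every fiber has the expected projective dimension $N - m(d+1)$, giving
$$
\dim I_m = md + \bigl(N - m(d+1)\bigr) = N - m.
$$

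Next I would push $I_m$ forward to $|L|$ under the first projection $\pi$. Its image $\Sigma_m = \pi(I_m)$ is a constructible subset of dimension $\le N - m$, and by construction $\Sigma_m$ is precisely the locus of members possessing at least $m$ distinct singular points; replacing it by its closure $\overline{\Sigma_m}$ does not change the dimension. Now take a general linear subspace $B\subseteq |L|$ with $\dim B = b < m$, i.e. $b \le m-1$. By the standard fact that a general linear subspace $B$ of $\PP^N$ is disjoint from a fixed closed set $\Sigma$ whenever $\dim B + \dim \Sigma < N$, and since here $b + \dim \overline{\Sigma_m} \le (m-1) + (N-m) = N-1 < N$, a general such $B$ satisfies $B\cap \overline{\Sigma_m} = \emptyset$. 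Consequently no member $S\in B$ has $m$ or more distinct singular points.

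Finally, I would note that this bound already delivers the full conclusion, including the word ``isolated.'' A member $S\in B$ has at most $m-1$ distinct singular points, so its singular locus is a finite set of cardinality $\le m-1$; a positive-dimensional singular locus would contain infinitely many, hence in particular $\ge m$, distinct singular points and is thereby excluded. Thus every $S\in B$ has at worst $m-1$ singular points, all necessarily isolated.

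The hard part — indeed the only place any hypothesis enters — is the uniform fiber-dimension computation in the first paragraph. Without the independent-conditions assumption, $h^0(L\otimes \mO_W(-2\sum p_i))$ could jump upward on special tuples $(p_1,\dots,p_m)$, enlarging $I_m$ beyond dimension $N-m$ and destroying the codimension bound on $\Sigma_m$; the hypothesis \eqref{E000805} is exactly what rules this out uniformly over $\mathrm{Conf}_m(W)$. Everything else is routine incidence-variety bookkeeping and the elementary passage from finiteness of the singular set to isolatedness of the singularities.
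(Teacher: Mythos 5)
Your proof is correct and follows essentially the same route as the paper's: the same incidence correspondence over the configuration space of $m$ distinct points, the same use of the independent-conditions hypothesis to pin every fiber at the expected dimension (giving total dimension $\dim|L|-m$), the same disjointness of a general linear $B$ with $\dim B<m$ from the image in $|L|$, and the same final observation that a positive-dimensional singular locus would yield $m$ distinct singular points and is therefore also excluded. There is nothing to add.
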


\begin{proof}
Let $U$ be the open set of $W^m$ consisting of $m$-tuples $(p_1, p_2, ...,
p_m)$ of distinct points of $W$ and $V\subset U\times |L|$ be the incidence correspondence
consisting of $(p_1, p_2, ..., p_m, S)$ satisfying $S\in \PP H^0(L\otimes \mO_W(-2p_1 - 2p_2
- ... - 2p_m))$.

Let $\pi_1$ and $\pi_2$ be the projection $V\to U$ and $V\to |L|$, respectively. Note that
\eqref{E000805} is equivalent to saying that
\begin{equation}\label{E000806}
 h^0(L\otimes \mO_W(-2p_1 - 2p_2 - ... - 2p_m)) = h^0(L) - m(n+1)
\end{equation}
where $n = \dim W$. Therefore, every fiber of $\pi_1$ has dimension $\dim |L| - m(n+1)$ and
hence $\dim V = \dim |L| - m$. It follows that $\pi_2(V)$ has dimension at most
$\dim |L| - m$.
So a general linear subspace $B$ of dimension $\dim B < m$ is disjoint from $\pi_2(V)$.

Clearly, a member $S\in |L|$ that has $\ge m$ isolated singularities or has singularities
along $\Gamma\subset S$ of $\dim\Gamma > 0$ belongs to
$\PP H^0(L\otimes \mO_W(-2p_1 - 2p_2
- ... - 2p_m))$ for some $(p_1, p_2, ..., p_m)\in U$. That is, such $S$ lies in $\pi_2(V)$.
Therefore, every $S\in B$ has at worst $m-1$ isolated singularities.
\end{proof}

\begin{lemma}\label{LEM001}
With the same hypotheses of \lemref{LEM000}, we further assume that $\mO_W(-3p)$ imposes
independent conditions on $H^0(L)$, i.e., the map
\begin{equation}\label{E000807}
H^0(L)\twoheadrightarrow H^0(L \otimes \mO_W/\mO_W(-3p))
\end{equation}
is surjective for all $p\in W$. For a general linear subspace $B$ of $|L|$,
\begin{enumerate}
 \item every member $S\in B$ has at worst $m-1$ isolated double points
if
\begin{equation}\label{E000808}
\dim B < \min\left(m, \binom{n+2}{2} - n\right)
\end{equation}
where $n = \dim W$;
\item every member $S\in B$ has at worst $m-1$ isolated double points of rank $\ge n-1$
if
\begin{equation}\label{E000809}
 \dim B < \min\left(m, 4\right)
\end{equation}
where we say $S$ has an isolated double point at $p$ of rank $\ge n-1$
if it is locally cut out on $W$ by
\begin{equation}\label{E000814}
x_1^2 + x_2^2 + ... + x_{n-1}^2 + y^{\ell} = 0
\end{equation}
for some $\ell \ge 2$;
\item
every $S\in B$ has at worst $m-1$ ADE singularities of types
\eqref{E000814} for $2\le \ell \le 4$ if \eqref{E000809} holds
and $I_\Lambda$ imposes independent conditions on $H^0(L)$ for every zero-dimensional
subscheme $\Lambda$ of $W$ supported at a single point $p$ and given by
\begin{equation}\label{E000813}
 \Lambda \cong \mO_W/\langle x_ix_j, x_i y^3, y^5\rangle
\end{equation}
for a set of generators $x_1, x_2, ..., x_{n-1}, y$ of $I_p$.
\end{enumerate}
\end{lemma}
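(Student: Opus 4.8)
The plan is to run the incidence-correspondence argument of \lemref{LEM000} three more times, each time imposing a finer condition on the local singularity, and then to invoke the splitting lemma to recognize the surviving singularities as the normal forms \eqref{E000814}; the independence hypotheses enter only through the dimensions of the fibres of the relevant incidence varieties. For (1), \lemref{LEM000} with $\dim B < m$ already forces every $S\in B$ to have at worst $m-1$ isolated singularities, so it remains to eliminate points of multiplicity $\ge 3$. Let $V_3\subset W\times |L|$ be the locus of $(p,S)$ with $S\in \PP H^0(L\otimes I_p^3)$. The hypothesis \eqref{E000807} that $\mO_W(-3p)$ imposes independent conditions gives $\dim \PP H^0(L\otimes I_p^3) = \dim|L| - \binom{n+2}{2}$, so $\dim V_3 = \dim|L| - (\binom{n+2}{2} - n)$ and $\codim \pi_2(V_3) \ge \binom{n+2}{2} - n$. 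A general $B$ of dimension $<\binom{n+2}{2} - n$ is disjoint from $\pi_2(V_3)$, and combined with the bound $\dim B<m$ every singularity of every $S\in B$ is then an isolated double point.

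For (2), keeping $\dim B<\min(m,4)$, I would discard, among the double points, those whose Hessian has rank $\le n-2$. Form the incidence variety $V'$ of pairs $(p,S)$ with $S$ singular at $p$ and Hessian of corank $\ge 2$. Vanishing to order two is $n+1$ conditions, and by \eqref{E000807} the $2$-jet of $S$ at $p$ can be prescribed freely, so inside the space of quadrics the corank-$\ge 2$ locus is determinantal of codimension $\binom{3}{2} = 3$. Hence the fibre of $V'\to W$ has dimension $\dim|L| - (n+1) - 3$, so $\dim V' = \dim|L| - 4$ and $\codim \pi_2(V')\ge 4$; a general $B$ with $\dim B<4$ avoids it. Every singularity is now a double point of corank $\le 1$, and the splitting lemma supplies analytic coordinates in which $S$ is $x_1^2 + \cdots + x_{n-1}^2 + g(y)$; isolatedness forces $g(y) = (\text{unit})\cdot y^\ell$ with $2\le \ell<\infty$, which is \eqref{E000814}.

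For (3) it remains to bound $\ell\le 4$, i.e.\ to eliminate singularities $x_1^2+\cdots+x_{n-1}^2+y^\ell$ with $\ell\ge 5$. The scheme $\Lambda$ of \eqref{E000813} is arranged so that such a singularity at $p$, with degenerate direction $\xi = [\partial_y]\in \PP(T_p W)$, produces a section in $H^0(L\otimes I_\Lambda)$: in the splitting coordinates one has $x_1^2+\cdots+x_{n-1}^2+y^\ell \in \langle x_ix_j,\, x_iy^3,\, y^5\rangle$ for $\ell\ge5$, and this ideal depends only on the pair $(p,\xi)$. Parametrizing $(p,\xi,S)$ with $S\in \PP H^0(L\otimes I_\Lambda)$ and using the independence hypothesis on $I_\Lambda$ (so that $\dim \PP H^0(L\otimes I_\Lambda) = \dim|L| - (3n+2)$, since $\dim_\C \mO_W/I_\Lambda = 3n+2$), the incidence variety has dimension $n + (n-1) + \dim|L| - (3n+2) = \dim|L| - n - 3$, whence $\codim \pi_2 \ge n+3\ge 4$. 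A general $B$ with $\dim B<4$ misses this image, so together with (2) every $S\in B$ has at worst $m-1$ singularities of type \eqref{E000814} with $2\le\ell\le4$.

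The hard part is the normal-form bookkeeping in (3): one must check that completing the square in $x_1,\dots,x_{n-1}$ feeds into the $y^2,y^3,y^4$ coefficients of $g$ \emph{only} through the coefficients of the monomials $y^2,y^3,y^4,\ x_iy,\ x_iy^2$ of the original equation, so that $\ell\ge5$ is equivalent to the vanishing of exactly those coefficients, i.e.\ to membership in $I_\Lambda$. This is what forces the specific generators $x_ix_j,\ x_iy^3,\ y^5$ and the colength $3n+2$, and makes the three successive independence hypotheses on $\mO_W(-2p_1-\cdots-2p_m)$, $\mO_W(-3p)$, and $I_\Lambda$ the exact input needed for the fibre-dimension (hence codimension) estimates to be sharp. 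One should also confirm that $I_\Lambda$ is genuinely a function of $(p,\xi)$ alone — invariant under reparametrizing $y$ and under $y\mapsto y+\sum c_i x_i$ modulo $\langle x_ix_j\rangle$ — so that the base of the final incidence variety has the expected dimension $2n-1$.
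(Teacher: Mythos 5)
Your proposal is correct and takes essentially the same route as the paper: all three parts are handled by the same incidence-correspondence dimension counts as in \lemref{LEM000}, with a general linear $B$ then chosen to miss the image of each incidence variety in $|L|$. The only deviations are cosmetic and both in your favor: in (2) you replace the paper's parametrization by $(n-2)$-dimensional subspaces $A\subset I_p/I_p^2$ with the codimension-$3$ determinantal (corank $\ge 2$) condition, which yields the identical count $\dim |L|-4$; and in (3) your observation that $I_\Lambda$ depends only on $(p,\xi)$ makes the base $(2n-1)$-dimensional rather than the paper's $3n-2$ (the paper over-parametrizes by the choice of $y$), so you get codimension $n+3$ where the paper gets $4$ --- either bound suffices for $\dim B<4$.
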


\begin{proof}
These statements are again proved by a simple dimension count as in the proof of \lemref{LEM000}.

For (1), we let $V\subset W\times |L|$ be the incidence correspondence consisting of
pairs $(p, S)$ satisfying $S\in \PP H^0(L\otimes \mO_W(-3p))$. A similar argument as
in the proof of \lemref{LEM000} shows that
\begin{equation}\label{E000810}
 \dim V = \dim |L| - \binom{n+2}{2} + n \ge
\dim (\pi_2(V)).
\end{equation}
It follows that no $S\in B$ has singularities of multiplicity $\ge 3$ if \eqref{E000808} holds.

For (2), we let $U$ be the variety parameterizing zero-dimensional subschemes $\Lambda$
of $W$ supported at a single point $p$ with ideal sheaf $I_\Lambda$ given by
\begin{equation}\label{E000811}
 I_\Lambda = \text{Sym}^2 A \oplus I_p^3
\end{equation}
where $A$ is a subspace of $I_p/I_p^2$ of dimension $n-2$. A dimension count shows that
$\dim U = n + 2(n-2)$ and each $I_\Lambda$ imposes independent conditions on $H^0(L)$
since $I_\Lambda \supset I_p^3$. Therefore,
\begin{equation}\label{E000812}
 \dim V = h^0(L\otimes I_\Lambda) - 1 + \dim U = \dim |L| - 4
\end{equation}
for $V\subset U\times |L|$
the incidence correspondence consisting of pairs $(\Lambda, S)$ satisfying
$S\in \PP H^0(L\otimes I_\Lambda)$. 
Hence every $S\in B$ has at worst isolated double points of rank $\ge n-1$
if \eqref{E000809} holds.

For (3), we let $U$ be the variety parameterizing the zero-dimensional subschemes given by
\eqref{E000813}. It is easy to see that
\begin{equation}\label{E000815}
\dim U = 3n - 2 \text{ and }
h^0(L\otimes I_\Lambda) = h^0(L) - (3n+2)
\end{equation}
for all $\Lambda\in U$. If we let $V\subset U\times
|L|$ be the incidence correspondence consisting of pairs $(\Lambda, S)$ satisfying
$S\in \PP H^0(L\otimes I_\Lambda)$, then $\dim V = \dim |L| - 4$. Clearly, if $S\in |L|$ has
singularities of types \eqref{E000814} for $\ell \ge 5$, then
$S\in \pi_2(V)$. Therefore, every $S\in B$ has at worst ADE singularities of type
$x_1^2 + x_2^2 + ... + x_{n-1}^2 + y^{\ell} = 0$ for $2\le \ell \le 4$ if \eqref{E000809} holds.
\end{proof}

\begin{proof}[Proof of \propref{PROP802}]
By Mayer-Vietoris sequence \eqref{eq:bloch-1} and \eqref{E000800}, we see that
$H^4(X, \BQ)$ carries a pure Hodge structure of type $(2,2)$. So for every morphism
$f: X \to W$ from $X$ to a smooth projective variety $W$, we have
\begin{equation}\label{E000801}
f^* H^4(W, \BQ) = f^* H^{2,2}(W, \BQ).
\end{equation}
If we assume that the Hodge conjecture holds in codimension $2$, then \eqref{E000802} follows immediately.
Without the Hodge conjecture, we need to show instead that 
there exists $\xi\in \Ch^2(W; \BQ)$ for every $\omega\in H^{2,2}(W, \BQ)$ such that
$f^*(\cl_2(\xi) - \omega) = 0$ in $H^4(X, \BQ)$.

We may assume that $f: X\to W$ is an embedding. Otherwise, since $X$ is projective, we have an embedding
$g: X\hookrightarrow \PP^N$. Clearly, $f$ factors through $f\times g: X\hookrightarrow W\times \PP^N$ and
we may replace $f$ by $f\times g$. Furthermore, by cutting $W$ with sufficiently ample divisors passing
through $f(X)$ and the weak Lefschetz theorem, we may assume that $\dim W = 4$. 

We choose a sufficiently ample line bundle $L$ on $W$ such that
there is a member $X\cup X'$ in $|L|$ with the
property that $X\cup X'$ is a divisor with simple normal crossings.

Let $G = |L|$ and $M\subset G\times W$ be the universal family
$M = \{ (S, p): p\in S \}$ over $G$. Clearly, $M$ is smooth for $L$ sufficiently ample. And there
is a point $o\in G$ such that $M_o\cong X\cup X'$ for the fiber $M_o$ of $M/G$ over $o$.

Furthermore, $M_B = M\times_G B$ is smooth for a general linear subspace
$B\cong \PP^3\subset G$ passing through $o$.
By \lemref{LEM001} and by choosing $L$ sufficiently ample,
we see that a singular fiber $M_b$ of $M_B/B$ has only isolated
ADE singularities of types
\begin{equation}\label{E000803}
w^2 + x^2 + y^2 + z^m = 0\ (2\le m\le 4)
\end{equation}
for $b\ne o$.
For such $M_b$, we see that $H^2(M_b, \BQ)$ and $H^4(M_b, \BQ)$ carry pure
Hodge structures and the Hard Lefschetz theorem holds.

The map $f: X\hookrightarrow W$ clearly factors through $M_B$. So we may replace $W$ by $M_B$.
Finally, we have
\begin{itemize}
 \item $W$ is a smooth projective 6-fold flat over $B\cong \PP^3$ via $\rho: W \to B$;
\item $W_o = X\cup X'$ has simple normal crossings for a point $o\in B$;
\item $W_b$ has at worst singularities of types \eqref{E000803} for $b\in U= B\backslash \{o\}$;
\item $h^2(W_b, \BQ)$ and $h^{1,1}(W_b, \BQ)$ are constant for $b\in U$;
\item $H^2(W_b, \BQ)$ and $H^4(W_b, \BQ)$ carry pure Hodge structures with the Hard Lefschetz theorem
\begin{equation}\label{E000799}
L: H^{2}(W_b, \BQ) \xrightarrow[\wedge c_1(L)]{\cong} H^{4}(W_b,\BQ)
\end{equation}
for $b\in U$.
\end{itemize}

Let $\omega \in H^{2,2}(W, \BQ)$. Since the Hard Lefschetz theorem holds on all fibers of $W$ over
$U$, we can find $\xi\in \Ch^2(W; \BQ)$ such that $\cl_2(\xi_b) - \omega_b = 0$
in $H^4(W_b, \BQ)$ for all $b\in U$ using a Hilbert scheme argument. We claim that
$\cl_2(\xi_o) - \omega_o = 0$ in $H^4(W_o, \BQ)$.

Let $W_U = W\times_B U$. From the exact sequence
\begin{equation}\label{E000797}
... \xrightarrow{} 
H^7(W_o, \BQ)
\xrightarrow{}
H_c^{8}(W_U, \BQ) \xrightarrow{} H^{8}(W, \BQ)\xrightarrow{} H^8(W_o, \BQ)
\xrightarrow{} ...
\end{equation}
we see that $H^8(W, \BQ)\cong H_c^8(W_U, \BQ)$ and hence $H^4(W, \BQ) \cong H^4(W_U, \BQ)$.

The cohomologies $H^\bullet(W, \BQ)$ and $H^\bullet(W_U, \BQ)$ are computed by Leray spectral sequences
$E_r^{p,q}(W)$ and $E_r^{p,q}(W_U)$ whose $E_2$ terms are
\begin{equation}\label{E000798}
E_2^{p,q}(W) = H^p(B, R^q\rho_* \BQ)
\text{ and }
E_2^{p,q}(W_U) = H^p(U, R^q\rho_* \BQ)
\end{equation}
respectively (cf. \cite{gh}). Hence we have injections 
$E_r^{0,4}(W) \hookrightarrow H^0(B, R^4\rho_* \BQ)$ and 
$E_r^{0,4}(W_U) \hookrightarrow H^0(U, R^4\rho_* \BQ)$ for $r\ge 2$.

By the commutative diagram
\begin{equation}\label{E000804}
\xymatrix{
H^4(W, \BQ) \ar[r]^{\cong} \ar[d] & H^4(W_U, \BQ) \ar[d]\\
E_\infty^{0,4}(W)\ar[r]^{\cong} \ar[d]^\subset & E_\infty^{0,4}(W_U) \ar[d]^\subset\\
H^0(B, R^4\rho_* \BQ) \ar[r] & H^0(U, R^4\rho_* \BQ)
}
\end{equation}
we see that $\cl_2(\xi) - \omega$ vanishes in $H^0(B, R^4\rho_* \BQ)$ since it vanishes
in $H^0(U, R^4\rho_* \BQ)$. It follows that $\cl_2(\xi_o) - \omega_o$ vanishes in
$H^4(W_o, \BQ)$ and $f^*(\cl_2(\xi) - \omega) = 0$. We are done.
\end{proof}

 \end{document}